\newtheorem{theorem}{Theorem}[section]
\newtheorem{lemma}[theorem]{Lemma}
\newtheorem{proposition}[theorem]{Proposition}
\newtheorem{corollary}[theorem]{Corollary}
\theoremstyle{definition}
\newtheorem{definition}[theorem]{Definition}
\newtheorem{remark}[theorem]{Remark}
\numberwithin{equation}{section}
\let\a=\alpha \let\al=\alpha
\let\b=\beta
\let\g=\gamma
\let\G=\Gamma  \let\Ga=\Gamma
\let\fy=\infty \let\i=\infty 
\let\l=\ell
\let\s=\sigma
\let\la=\lambda
\let\La=\Lambda
\let\om=\omega
\let\th=\theta
\let\wt=\widetilde
\def\bbR{\mathbb{R}}
\def\bbN{\mathbb{N}}
\def\bbZ{\mathbb{Z}}
\def\scrP{\mathscr{P}}
\def\scrF{\mathscr{F}}
\def\scrS{\mathscr{S}}
\def\calD{\mathcal {D}}
\def\calF{\mathcal {F}}
\def\calI{\mathcal {I}}
\def\calL{\mathcal {L}}
\def\calT{\mathcal {T}}
\def\calS{\mathcal {S}}
\def\rr{{\mathbb R}}
\def\rd{{{\bbR}^d}} 
\def\rdd{{{\rr}^{2d}}}  
\def\zd{{{\mathbb{Z}}^d}}
\def\zdd{{{\mathbb{Z}}^{2d}}}
\def\mpqs{M^{p,q}_s}
\def\wpqs{W^{p,q}_s}
\newcommand{\be}{\begin{equation*}}
	\newcommand{\ee}{\end{equation*}}
\newcommand{\ben}{\begin{equation}}
	\newcommand{\een}{\end{equation}}
\newcommand{\bn}{\begin{enumerate}}
	\newcommand{\en}{\end{enumerate}}
\newcommand{\bs}{\backslash}
\newcommand{\lan}{\langle}
\newcommand{\ran}{\rangle}
\def\wpq{W^{p,q}}        
\def\wpqm{W^{p,q}_m}
\def\wpoqom{W^{p_1,q_1}_m}
\def\wptqt{W^{p_2,q_2}}
\def\mpqm{{M^{p,q}_m}}
\begin{document}
\title[Schr\"{o}dinger operator on Wiener Amalgam spaces in the full range]
{Schr\"{o}dinger operator on Wiener Amalgam spaces in the full range}
\address{School of Mathematics and Statistics, Xiamen University of Technology, Xiamen, 361024, P.R.China} 
\email{guopingzhaomath@gmail.com}
\author{GUOPING ZHAO}
\address{School of Science, Jimei University, Xiamen, 361021, P.R.China}
\email{weichaoguomath@gmail.com}
\author{WEICHAO GUO}
\subjclass[2000]{42B15, 42B35}
\keywords{Schr\"{o}dinger operator, Wiener Amalgam spaces, boundedness. }

\begin{abstract}
Using the technique of Gabor analysis, an equivalent characterization is established for the boundedness $e^{i\Delta}: \wpoqom\rightarrow \wptqt$, 
where $0<p_i,q_i\leq\i$ and $m$ is a $v$-moderate weight. 
The sharp exponents for the boundedness is also characterized in the case of power weight. 
\end{abstract}

\maketitle


\section{Introduction}
Consider the Cauchy problem of the following Schr\"{o}dinger equation
\begin{equation*}
	\begin{cases*}
		i\partial_tu+\Delta u=F(u),\\
		u(0,x)=u_0.
	\end{cases*}
\end{equation*}
The formal solution is given by
\be
u(t)=e^{it\Delta}u_0-i\int_0^te^{i(t-s)\Delta}F(u(s))ds.
\ee
If we want to establish the local existence with the initial data $u_0$ belonging to certain function space $X$,
a crucial step is to establish the corresponding norm estimate for the free Schr\"{o}dinger operator  on $X$, that is, to establish the estimate of following type:
\ben\label{Int-1}
\|e^{i\Delta}f\|_X\lesssim \|f\|_X.
\een
In the classical research of Schr\"{o}dinger equation, the initial data $u_0$ is usually assumed to belong to a $L^2$-based function spaces, such as
the $L^2$-based Sobolev space $(I-\Delta)^{\frac{-s}{2}}L^2$ or Besov space $B^{2,q}_s$. An important reason is that only $p=2$ allows the estimate $\eqref{Int-1}$ to hold on
$X=(I-\Delta)^{\frac{-s}{2}}L^p$ or $X=B^{p,q}_s$.

The situation changes if we replace the classical dyadic decomposition in the definition of Besov spaces by uniform decomposition.
More precisely,  if we use modulation space $X=\mpqs$ in the estimate \eqref{Int-1}, 
the following boundedness is valid (\cite{WangZhaoGuo2006JoFA, BenyiGroechenigOkoudjouRogers2007JoFA}):
\ben\label{Int-2}
\|e^{i\Delta}f\|_{\mpqs}\leq  C_{p,q}\|f\|_{\mpqs},\ \ \ p,q\in (0,\fy].
\een
Due to this advantage of modulation space, many researchers have begun to use modulation space to study partial differential equations.
We refer the reader to the pioneer works \cite{BenyiOkoudjou2009BotLMS, WangHudzik2007JDE} and to  \cite{BhimaniHaque2023JoFA, Schippa2022JoFA} for some recent progress.

Modulation spaces were introduced firstly by Feichtinger \cite{Feichtinger1983TRUoV} in 1983. It has been regarded as a basic and important class of function spaces
in the field of time-frequency, see \cite{Groechenig2001}.
Comparing with the classical Besov space $B^{p,q}_s$, modulation space $\mpqs$ (see \cite{Triebel1983ZfAuiA} for the equivalent norm) can be regarded as the Besov type space associated with uniform decomposition on the frequency domain.
Another important function space that is closely related to the modulation space (see Lemma \ref{lm-eq-W-M}) is the Wiener amalgam space $\wpqs$ (see Subsection 2.1). 
This space can be regarded as the Triebel-type space associated with uniform decomposition, see \cite{Triebel1983ZfAuiA} for the equivalent norm.

Due to the boundedness in \eqref{Int-2} and the fact that both modulation and Wiener amalgam space are defined based on the uniform decomposition,
an interesting question is to establish the corresponding boundedness on $\wpqs$ of Schr\"{o}dinger operator.
For this direction, one can see Kato-Tomita\cite{KatoTomita2018MN} for a sharp estimate as follows.

\textbf{Theorem A} (\cite{KatoTomita2018MN}) 
Suppose $1\leq p,q\leq\i$. 
Then $e^{i\Delta}: \wpq_{1\otimes v_s}\rightarrow\wpq$ is bounded if and only if $s\geq d|1/p-1/q|$ with the strict inequality when $p\neq q$. 

Here, we write the power weights $v_s(\xi):=(1+|\xi|)^{\frac{s}{2}}$.
From this theorem, one can find that the Schr\"{o}dinger operator is unbounded on $\wpqs$ for $p\neq q$. This is quite different from the case on modulation spaces.
We also refer to \cite{GuoZhao2020JoFA} for the sharp estimate on $\wpqs$ of a class of unimodular Fourier multipliers.
In fact, because of the weaker separation property of Triebel-type spaces, the behaviors of $e^{i\Delta}$ on $\wpqs$  is more difficult to study than that on modulation space.
For instance, the boundedness $e^{i\Delta}: M^{p_1, q_1}_{s}\rightarrow M^{p_2, q_2}$ of full range $p_i, q_i\in (0,\fy]$ has been established in \cite{ZhaoChenFanGuo2016NLAT}.
However, the corresponding boundedness result on Wiener amalgam space of full range is still unknown.
The main goal of this paper is to fill this gap. 
To achieve this goal, our strategy is to first establish an equivalent characterization for the case of general weight. 
To avoid the fact that $\calS(\rd)$ is not dense in some endpoint spaces, such as the case $p=\fy$ or $q=\fy$,
we only consider the action of $e^{i\Delta}$ on Schwartz function spaces.
For the sake of simplicity, we use the statement ``$e^{i\Delta}: W^{p_1,q_1}_{m}\rightarrow W^{p_2,q_2}$'' or $e^{i\Delta}\in \calL(W^{p_1,q_1}_{m},W^{p_2,q_2})$ to express the meaning that
\be
\|e^{i\Delta}f\|_{W^{p_2,q_2}}\leq C\|f\|_{W^{p_1,q_1}_m}\ \ \text{for all}\ f\in \calS(\rd).
\ee
Here, we use $m$ to denote the weight function belonging to the class $\scrP(\rdd)$, see Subsection 2.1 for the precise definition.
We write the dilation operator $\calD_{\la_1,\la_2}m(x,\xi)=m(\la_1 x,\la_2 \xi)$.
For a sequence $\vec{a}=\{a_{k,n}\}$, denote by $(\calT\vec{a})_{k,n}=a_{k-n,n}$ the coordinate transformation of $\vec{a}$.
Our first theorem gives an equivalent characterization of boundedness.
\begin{theorem}[Equivalent characterization]\label{thm-bd-eq}
	Let $p_i, q_i\in (0,\infty]$ for $i=1,2$. Assume that $m\in \scrP(\rdd)$. We have the following equivalent relation
	\be
	e^{i\Delta}\in \calL(W^{p_1,q_1}_{m}(\rd),W^{p_2,q_2}(\rd))
	\Longleftrightarrow 
	\calT\in \calL(l^{(p_1,q_1)}_{\calD_{2, 1/2}m}(\zdd), l^{(p_2,q_2)}(\zdd)).
	\ee
\end{theorem}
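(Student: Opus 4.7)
The strategy is to use time-frequency (Gabor) analysis to convert the continuous boundedness of $e^{i\Delta}$ on Wiener amalgam spaces into a purely discrete sequence-space statement. The central observation is that $e^{i\Delta}$ implements a parabolic shear $(x,\xi)\mapsto(x-2\xi,\xi)$ on the short-time Fourier transform, and sampling this shear on an appropriate Gabor lattice produces exactly the integer shift $(k,n)\mapsto(k-n,n)$ that defines $\calT$.

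First I would establish a Gabor-type discretization of the Wiener amalgam norm, valid in the full quasi-Banach range: for a Schwartz window $g$ and admissible lattice parameters $(\a,\b)$,
\be
\|f\|_{\wpqm} \sim \big\|\{V_g f(\a k,\b n)\}_{(k,n)\in\zdd}\big\|_{l^{(p,q)}_{\widetilde m}},
\ee
where $\widetilde m$ is the sampled weight and the mixed-norm order is the one appropriate to the Triebel-type structure of $\wpq$. For $f\in\calS$ this is combined with the STFT identity
\be
|V_g(e^{i\Delta}f)(x,\xi)| = |V_{e^{i\Delta}g}f(x-2\xi,\xi)|,
\ee
obtained directly from $\widehat{e^{i\Delta}f}(\xi)=e^{-i|\xi|^2}\hat f(\xi)$ and the standard covariance properties of $V_g$. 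Since $e^{i\Delta}g$ is again Schwartz, it yields an equivalent discrete characterization, and choosing $(\a,\b)$ so that the continuous shear $(x,\xi)\mapsto(x-2\xi,\xi)$ becomes an integer shift on the sampling lattice turns the transition $V_g f \mapsto V_g(e^{i\Delta}f)$ precisely into the coordinate change $\vec c \mapsto \calT\vec c$. Matching the lattice rescaling with the unit-cube uniform decomposition that defines $W^{p,q}$ produces the dilation $\calD_{2,1/2}$ on the weight, which immediately gives the implication ``$\calT$ bounded $\Rightarrow e^{i\Delta}$ bounded''.

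The reverse direction is the main obstacle. It requires that the sampled Gabor coefficients $\{V_g f(\a k,\b n)\}_{(k,n)}$ of Schwartz functions $f$ realize, up to equivalent norm, a sufficiently ample family of sequences, so that the inequality $\|e^{i\Delta}f\|_{\wptqt}\lesssim\|f\|_{\wpoqom}$ on $\calS$ can be translated into the inequality for $\calT$ on every test sequence in $l^{(p_1,q_1)}_{\calD_{2,1/2}m}$. In the quasi-Banach range $0<p,q\leq\infty$ the standard duality and density arguments fail, so one has instead to construct a Schwartz synthesis/dual window and prove the corresponding Gabor-frame estimates in the spirit of Galperin-Samarah-type atomic characterizations. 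Extra care is needed at the endpoints $p=\infty$ or $q=\infty$, where $\calS$ is not dense in $\wpq$, which is precisely why the theorem is formulated only on Schwartz inputs; the rest of the argument is a careful bookkeeping of the lattice parameters, the weight transformation, and the independence of the discrete characterization from the choice of window within $\calS$.
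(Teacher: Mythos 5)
Your overall architecture matches the paper's: a full-range Gabor discretization of $W^{p,q}_m$ (the paper's Lemmas \ref{lm-bdW-CgDg}--\ref{lm-eqnorm-wpqm}), the STFT shear identity for the propagator (Lemma \ref{lm-STFT-Schrodinger}, applied after first rescaling $e^{i\Delta}$ to $e^{-i\pi|D|^2}$, which is precisely where the dilation $\calD_{2,1/2}$ of the weight enters cleanly), and a transfer between the continuous and discrete inequalities. However, the implication $e^{i\Delta}\in\calL(W^{p_1,q_1}_{m},W^{p_2,q_2})\Rightarrow\calT\in\calL(l^{(p_1,q_1)}_{\calD_{2,1/2}m},l^{(p_2,q_2)})$ is left with a genuine gap. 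You correctly identify that one must show the sampled Gabor coefficients of Schwartz functions form a ``sufficiently ample'' family, but Galperin--Samarah-type frame estimates do not deliver this: the analysis map $f\mapsto\{V_gf(\a k,\b n)\}$ has range a proper subspace of $l^{(p_1,q_1)}_{m}$, so norm equivalences alone never allow you to test the discrete inequality on an arbitrary sequence. The paper closes this with a concrete device you would need to supply: choose a nonnegative $h\in\calS(\rd)$ with $\mathrm{supp}\,\hat h\subset B(0,1/8)$ and $\|h\|_{L^2}=1$; then for any finite nonnegative sequence $\vec a$ and $f=D_h\vec a$, disjointness of frequency supports forces $\langle\pi(j,l)h,\pi(k,n)h\rangle=0$ unless $l=n$, and positivity gives $V_hf(k,n)=\sum_j a_{j,n}\langle T_jh,T_kh\rangle\geq a_{k,n}$. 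Combining this pointwise lower bound with the sampling inequality for $V_{e^{-i\pi|D|^2}h}(e^{-i\pi|D|^2}f)$ (whose lattice samples are exactly $e^{-i\pi|n|^2}V_hf(k-n,n)$) and the synthesis bound $\|D_h\vec a\|_{W^{p_1,q_1}_m}\lesssim\|\vec a\|_{l^{(p_1,q_1)}_m}$ yields $\|\calT\vec a\|_{l^{(p_2,q_2)}}\lesssim\|\vec a\|_{l^{(p_1,q_1)}_m}$; since $\calT$ only permutes indices, nonnegative truncated sequences suffice.

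A secondary, more technical omission is in the converse direction, which you describe as immediate. The frame inequalities force a fine lattice $\frac1N\zd\times\frac1N\zd$, whereas the hypothesis on $\calT$ involves the weight sampled at integer points; for a general $v$-moderate $m$ one cannot simply replace $m(k/N,n/N)$ by $m(k,n)$. The paper handles this by splitting $\zdd$ into the $N^{2d}$ residue classes $\Gamma_{j,l}=\{(Nk+j,Nn+l)\}$, applying the discrete boundedness of $\calT$ on each class (where the sample points are integer points shifted by a bounded amount, so that $m$ is comparable to its integer samples), and summing the finitely many pieces. Your write-up should either include this reduction or justify why the two samplings give equivalent weighted norms.
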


Thanks to the above equivalent relation, one can turn to studying the corresponding discrete inequality without having to consider the Schr\"{o}dinger operator directly.
Our second theorem is a sharp exponents characterization for the case of $m=1\otimes v_s$. 
It is known that the boundedness $e^{i\Delta}\in \calL(W^{p_1,q_1}_{1\otimes v_s}(\rd),W^{p_2,q_2}(\rd))$ is the most interesting case, especially in the crossing field of harmonic analysis and PDEs.

\begin{theorem}[Sharp exponents characterization]\label{thm-bd-se}
	Let $s\in \bbR$, $p_i, q_i\in (0,\infty]$ for $i=1,2$.  
	Denote $A=d(1/p_2-1/q_1)$, $B=d(1/q_2-1/p_1)$.
	The boundedness 
	\be
	e^{i\Delta}\in \calL(W^{p_1,q_1}_{1\otimes v_s}(\rd),W^{p_2,q_2}(\rd))
	\ee
	holds if and only if $1/p_2\leq 1/p_1$ and
	\be
	s\geq A\vee B\vee (A+B)\vee 0
	\ee
    with the strict inequality if one of the following cases happens:
    \bn
    \item $A>0\geq B$;
    \item $B>0\geq A$;
    \item $A, B>0$, $p_1=p_2$.
    \en  
\end{theorem}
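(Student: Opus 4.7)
The plan is to reduce the problem to a discrete question via Theorem \ref{thm-bd-eq}. Since $\calD_{2,1/2}(1\otimes v_s)$ depends only on the second variable and is equivalent to a power weight in $n$, the task becomes characterizing the boundedness of the coordinate transformation $\calT$ between the weighted mixed-norm sequence spaces $l^{(p_1,q_1)}_{\calD_{2,1/2}(1\otimes v_s)}(\zdd)$ and $l^{(p_2,q_2)}(\zdd)$; after the substitution $k\mapsto k+n$, the inequality becomes a purely combinatorial estimate in which the shift in the first coordinate couples the two indices.

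For the necessity direction, I would evaluate the inequality on a short list of explicit test sequences: the indicator $\mathbbm{1}_{\{|k|\le N,\,n=0\}}$ forces $1/p_2\le 1/p_1$, a single-point test yields $s\ge 0$, the ``column'' sequence $\mathbbm{1}_{\{k=0,\,|n|\le N\}}$ produces $s\ge A$, the ``reverse-diagonal'' sequence $\mathbbm{1}_{\{k=-n,\,|n|\le N\}}$ produces $s\ge B$, and the ``skewed-box'' sequence $\mathbbm{1}_{\{|k+n|\le N,\,|n|\le N\}}$---which $\calT$ maps to a standard box---produces $s\ge A+B$. For the strict inequalities in cases (1)--(3), these indicator tests are insufficient; instead one tests on logarithmically refined sequences of the form $\langle n\rangle^{-\alpha}(\log\langle n\rangle)^{-\gamma}$ on the corresponding support, choosing $\alpha$ and $\gamma$ so that at the borderline value of $s$ exactly one of the two mixed norms diverges logarithmically.

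For the sufficiency direction, one closes the estimate case-by-case in the four sign regimes of $(A,B)$. In each regime the strategy combines a discrete $l^p$-embedding (which requires $1/p_2\le 1/p_1$), H\"older's inequality in the complementary variable, and the power weight, with the relevant exponent among $A$, $B$, $A+B$, $0$ arising as the natural summation cost; real interpolation between endpoint cases then covers the full range $p_i,q_i\in(0,\infty]$. The principal obstacle will be the sharp logarithmic analysis in cases (1)--(3): the logarithmic exponent $\gamma$ in the test sequences must be tuned so that the divergence occurs on exactly the required side, and the sufficiency must extract a matching convergent logarithmic gain whenever $s$ strictly exceeds the relevant borderline. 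A secondary difficulty is organizing the case analysis across the full quasi-Banach range, where duality is unavailable and the argument must proceed by direct computation.
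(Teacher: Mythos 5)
Your overall route coincides with the paper's: reduce via Theorem \ref{thm-bd-eq} to the boundedness of $\calT$ on weighted mixed-norm sequence spaces, prove necessity by test sequences, and prove sufficiency by embeddings plus an order-exchange argument. Your necessity tests are essentially the paper's Step~1; the only stylistic difference is that the paper feeds \emph{general} sequences $b_k$, $b_n$ into the column, diagonal and box tests, so that the required strictness in cases (1)--(3) falls out of the known sharp embedding $l^{q}_{v_s}\subset l^{p}$ (Lemma \ref{lm-eb-lp}) instead of hand-tuned logarithmic profiles. For case (3) the decisive test is $a_{k,n}=b_n$ on the box $|k|\le 2N$, $|n|\le N$, which forces $l^{q_1}_{v_s}\subset l^{q_2}$ and hence $s>A+B$, since $p_1=p_2$ together with $A+B>0$ gives $1/q_2>1/q_1$; your ``logarithmic profile on the corresponding support'' would have to be exactly this, and you should say so.

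The genuine gap is in the sufficiency at the critical line $s=A+B$ with $A,B>0$ and $p_1<p_2$ (the set $Z_{3,2}$ in the paper). The theorem asserts boundedness there \emph{with equality}, but your plan only provides for ``a matching convergent logarithmic gain whenever $s$ strictly exceeds the relevant borderline''; an embedding-plus-H\"older scheme cannot close at $s=A+B$ because the kernel $\lan n\ran^{-s}$ is exactly non-summable at the scaling-critical exponent, and your phrasing even suggests you expect boundedness to fail on that line, contradicting the statement you are proving. The paper handles this by passing to the endpoint $q_1=\i$, where the estimate reduces to the discrete Hardy--Littlewood--Sobolev inequality $\calI_{\la}\in\calL(l^{r_1},l^{r_2})$ with $r_i=p_i/q_2$ and $\la=d(1/r_1-1/r_2)\in(0,d)$ --- this is precisely where the hypothesis $p_1<p_2$ is consumed --- and then recovers general $q_1$ by complex interpolation against the trivial bound $\calT\in\calL(l^{(r,r)},l^{(r,r)})$. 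Without this fractional-integration input (or an equivalent endpoint device) your argument proves strictly less than the theorem. Two smaller corrections: the workhorse for decoupling the shear from the mixed norm is the Minkowski-type exchange $\|\calT\vec a\|_{l^{(p,q)}_{1\otimes v_s}}\lesssim\|\vec a\|_{l^{(q,p)}_{1\otimes v_s}}$ for $q\le p$ (Lemma \ref{lm-bd-pq-qp}), not H\"older; and the interpolation needed is complex rather than real, which matters in the quasi-Banach mixed-norm setting.
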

Using this theorem, one can conclude directly the following boundedness of Schr\"{o}dinger operator on Wiener amalgam spaces without potential loss.
\begin{corollary}
  $e^{i\Delta} \in \calL(W^{p_1,q_1}(\rd),W^{p_2,q_2}(\rd))$ holds if and only if
  \begin{equation*}
  q_1\leq p_2, \text{ and } p_1\leq q_2 \wedge p_2.
  \end{equation*}
\end{corollary}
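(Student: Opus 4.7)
The plan is to derive the corollary as a direct specialization of Theorem \ref{thm-bd-se} to the unweighted case, i.e., to the exponent $s=0$ (since the trivial weight $1$ equals $1\otimes v_0$). With this specialization in hand, the task reduces to a careful bookkeeping of which of the listed inequalities are active, and to confirming that none of the three ``strict inequality'' exceptions can be triggered when $s=0$.

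First I would write down the conditions produced by Theorem \ref{thm-bd-se} at $s=0$. The main inequality
\be
0=s\geq A\vee B\vee (A+B)\vee 0
\ee
with $A=d(1/p_2-1/q_1)$ and $B=d(1/q_2-1/p_1)$ forces simultaneously $A\leq 0$, $B\leq 0$, and $A+B\leq 0$. Translating back to the exponents, $A\leq 0$ is equivalent to $q_1\leq p_2$ and $B\leq 0$ is equivalent to $p_1\leq q_2$; the condition $A+B\leq 0$ is then automatic. Adjoining the separate hypothesis $1/p_2\leq 1/p_1$ of the theorem yields $p_1\leq p_2$. Combining these three inequalities gives
\be
q_1\leq p_2,\qquad p_1\leq q_2\wedge p_2,
\ee
which is exactly the stated characterization.

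Next I would verify that none of the three ``strict inequality'' cases in Theorem \ref{thm-bd-se} can occur under our hypotheses: each of the cases (1), (2), (3) requires at least one of $A$, $B$ to be strictly positive, which directly contradicts the non-strict inequalities $A\leq 0$ and $B\leq 0$ obtained above. Hence the strict inequality is never forced on the parameter $s=0$, and the ``$\geq$'' version of Theorem \ref{thm-bd-se} applies verbatim. The converse direction (necessity of the conditions) is the same specialization applied in the other direction.

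Since the corollary is essentially a bookkeeping consequence of Theorem \ref{thm-bd-se}, there is no genuine technical obstacle; the only point requiring care is the case analysis for the strict inequalities, which as noted is automatically vacuous when $s=0$.
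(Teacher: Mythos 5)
Your proposal is correct and follows exactly the route the paper intends: the corollary is stated as a direct specialization of Theorem \ref{thm-bd-se} to $s=0$, where $0\geq A\vee B\vee(A+B)\vee 0$ forces $A\leq 0$ and $B\leq 0$, which together with $1/p_2\leq 1/p_1$ give $q_1\leq p_2$ and $p_1\leq q_2\wedge p_2$, and which simultaneously rule out all three strict-inequality cases. The bookkeeping is accurate and nothing is missing.
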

\begin{remark}
Some remarks about our main theorems are listed as follows:
\bn
\item 
The proof of Theorem \ref{thm-bd-eq} is based on the Gabor analysis of Wiener amalgam space and the magic formula in Lemma \ref{lm-STFT-Schrodinger} associated with Schr\"{o}dinger operator and the time-frequency shift.
We point out that this formula was also used in \cite{KatoKobayashiIto2012TMJSS} for giving some conservation quantity associated with Schr\"{o}dinger operator. 
\item 
Theorem \ref{thm-bd-se} is an essential extension of Theorem A in \cite{KatoTomita2018MN}.
In our theorem of full range, more endpoint cases create new difficulties. 
The equivalent characterization in Theorem \ref{thm-bd-eq} allows us to see the nature more clearly and solve it.
\item 
All the conclusions in Theorems \ref{thm-bd-eq} and \ref{thm-bd-se} can be extended to the fixed time Schr\"{o}dinger operator $e^{it_0\Delta}$ for any fixed $t_0\in\bbR$. 
\item 
The method of this article is also applicable in the study of the following boundedness:
\[
  e^{i\Delta} \in \calL(X_1, X_2), 
  \text{ with }X_j=W^{p_j,q_j}_{m_j} \text{ or } M^{p_j,q_j}_{m_j}, j=1,2.
\]
\item 
Although the magic formula in Lemma \ref{lm-STFT-Schrodinger} seems to hold only for very special unimodular Fourier multipliers, 
the similar equivalent characterizations as in Theorem \ref{thm-bd-eq} of general unimodular Fourier multipliers are still expected to be valid with appropriate modification.
However, the exponents characterizations as in Theorem \ref{thm-bd-se} should be much more difficult for more general unimodular Fourier multipliers.
\en
\end{remark}

The rest of this paper is organized as follows. 
In section 2, we first introduce some basic definitions and properties of the function spaces used throughout this paper.
We also prepare some useful conclusions of the Gabor analysis on Wiener amalgam space.
Section 3 is devoted to the equivalent characterization of the boundedness $e^{i\Delta}: \wpoqom\rightarrow \wptqt$.
As mentioned above, the key tools are the Gabor analysis and the magic formula of Schr\"{o}dinger operator.
The sharp exponents characterization is proved in Section 4. In some critical case, the boundedness of Schr\"{o}dinger operator
is found to be equivalent to the boundedness of fractional integral operators.

\section{Preliminaries}
	\subsection{Function spaces}
	In order to introduce the function spaces, we first recall some definitions of weight.
	Recall that a weight is a positive and locally integral function on $\rdd$.
	A weight function $m$ is called $v$-moderate if there exists another weight function $v$ such that
	\be
	m(z_1+z_2)\leq C_v^m v(z_1)m(z_2),\ \ \ \ z_1,z_2\in \rdd,
	\ee
	where $v$ belongs to the class of submultiplicative weight, that is, $v$ satisfies
	\be
	v(z_1+z_2)\leq v(z_1)v(z_2),\ \ \ \ z_1,z_2\in \rdd.
	\ee
	Moreover, in this paper,
	we assume that $v$ has at most polynomial growth.
	If the associated weight $v$ is implicit, we call that $m$ is moderate,
	and use the notation $\scrP(\rdd)$ to denote the cone of all wights which are moderate	in $\rdd$.
	We also assume that every $m\in \scrP(\rdd)$ is continuous since there exists a continuous weight $m_1$ such that $m\sim m_1$. 
	We refer to \cite{1979Gewichtsfunktionen} for the origin of the $v$-moderate weights. See also \cite{Heil2003} for more properties of these weights.

   \begin{definition}[Continuous mixed-norm spaces.]
   	Let $m\in \scrP(\rdd)$, $p,q\in (0,\fy]$. The weighted mixed-norm space $L^{p,q}_m(\rdd)$
   	consists of all Lebesgue measurable functions on $\rdd$ such that the (quasi-)norm
   	\be
   	\begin{split}
   		\|F\|_{L^{p,q}_m(\rdd)}
   		= &
   		\|Fm\|_{L^{p,q}(\rdd)}
   		=
   		\|F(x,\xi)m(x,\xi)\|_{L^{p,q}_{x,\xi}}
   		\\
   		= &
   		\left(\int_{\rd}\left(\int_{\rd}|F(x,\xi)|^pm(x,\xi)^pdx\right)^{q/p}d\xi\right)^{1/q}
   	\end{split}
   	\ee
   	is finite, with the usual modification when $p=\fy$ or $q=\fy$. We also use the notation
   	\be
   	\begin{split}
   		\|F\|_{L^{(p,q)}_m(\rdd)}
   		= &
   		\|Fm\|_{L^{(p,q)}(\rdd)}
   		=
   		\|F(x,\xi)m(x,\xi)\|_{L^{q,p}_{\xi,x}}
   		\\
   		= &
   		\left(\int_{\rd}\left(\int_{\rd}|F(x,\xi)|^qm(x,\xi)^qd\xi\right)^{p/q}dx\right)^{1/p}.
   	\end{split}
   	\ee
   	Then the mixed-norm space $L^{(p,q)}_m(\rdd)$ can be defined by the similar way as above.
   \end{definition}
	
	In order to introduce the definitions of modulation and Wiener amalgam spaces, we recall some definitions and notations in time-frequency analysis. 
For $x,\xi\in\bbR^d$, the translation operator $T_x$ and the modulation operator $M_{\xi}$ are given by
\[
T_xf(t)=f(t-x), \text{ and } M_{\xi}f(t)=e^{2\pi i \xi\cdot t}f(t).
\]
For $z:=(x, \xi)$,
we also use the notation $\pi(z):=M_{\xi}T_x$, which is also known as the time-frequency shift on the phase plane.

\begin{definition}
	Let $g\in\calS(\bbR^d)\setminus\{0\}$, the short-time Fourier transform(STFT) of $f\in \calS'(\bbR^d)$ with respect to the window $g$ is defined by
	\[
	V_gf(x,\xi)
	=\int_{\bbR^d}f(t)\overline{g(t-x)}e^{-2\pi i\xi\cdot t}d t
	=\langle f, M_{\xi}T_xg \rangle
	=\langle f, \pi(z)g \rangle,
	\]
	where the integral makes sense for nice function $f$.
\end{definition}

The so-called fundamental identity of time-frequency analysis is as follows:
\ben\label{fid}
V_gf(x,\xi)=e^{-2\pi ix\cdot \xi}V_{\hat{g}}\hat{f}(\xi,-x),\ \ \ (x,\xi)\in \rdd.
\een

It can be observed that the STFT of a distribution $f$ takes both the decay and smooth properties into account.
The modulation space can be regarded as a collection of functions sharing the same decay and smooth properties.
Now, we recall the definition of modulation space.
\begin{definition}[Modulation space]\label{df-M}
	Let $0<p,q\leq \infty$, $m\in \mathscr{P}(\rdd)$.
	Given a non-zero window function $\phi\in \calS(\rd)$, the (weighted) modulation space $M^{p,q}_m(\rd)$ consists
	of all $f\in \calS'(\rd)$ such that the (quasi-)norm
	\be
	\begin{split}
		\|f\|_{M^{p,q}_m(\rd)}&:=\|V_{\phi}f\|_{L^{p,q}_m(\rdd)}
		=\left(\int_{\rd}\left(\int_{\rd}|V_{\phi}f(x,\xi)m(x,\xi)|^{p} dx\right)^{{q}/{p}}d\xi\right)^{{1}/{q}}
	\end{split}
	\ee
	is finite.
	If $m\equiv 1$, we write $M^{p,q}$ for short. We also write $M^{p,q}_s$ for the case $m=1\otimes v_s$.
\end{definition}
Recall that the above definition of $M^{p,q}_m$ is independent of the choice of window function $\phi$.
We refer the readers to \cite{Groechenig2001} for the case $(p,q)\in\lbrack 1,\infty ]^{2}$,
and to \cite[Theorem 3.1]{GalperinSamarah2004ACHA} for full range $(p,q)\in (0,\infty ]^{2}$.
More precisely, a class of admissible windows denoted by $\mathfrak{M}^{p,q}_v$ was found in \cite{GalperinSamarah2004ACHA}, such that
every window function $g\in \mathfrak{M}^{p,q}_v$ yields the equivalent quasi-norm on $\mpqm$.

\begin{definition}[The space of admissible windows]\label{D.window-class}
	Let $0<p, q\leq\infty$, $r=\min\{1,p\}$ and $s=\min\{1, p, q\}$.
	Let $m$ be $v$-moderate.
	For $r_1, s_1>0$, denote
	\[w_{r_1, s_1}(x,w)=v(x,\omega)(1+|x|)^{r_1}(1+|\omega|)^{s_1}.\]
	Then $\mathfrak{M}_v^{p,q}$ the admissible windows for the modulation space $M_m^{p,q}$  can be defined as:
	\[ \mathfrak{M}_v^{p,q}:=\bigcup_{\substack{r_1>d/r\\s_1>d/s\\1\leq p_1<\infty}} M^{p_1}_{w_{r_1,s_1}}.\]
\end{definition}

Next, we recall the definition of the $\wpqm$ space.

\begin{definition}[Wiener amalgam space]\label{df-W}
	Let $0<p,q\leq \infty$, $m\in \mathscr{P}(\rdd)$.
	Given a non-zero window function $\phi\in \calS(\rd)$, the (weighted) Wiener amalgam space $W^{p,q}_m(\bbR^d)$ consists of all tempered distributions $f\in\calS'(\bbR^d)$ such that 
	\be
	\begin{split}
		\|f\|_{W^{p,q}_m(\bbR^d)}&:=\|V_{\phi}f\|_{L^{(p,q)}_m(\rdd)}
		=\left(\int_{\bbR^d}\left(\int_{\bbR^d}|V_{\phi}f(x,\xi)m(x,\xi)|^{q} d\xi\right)^{{p}/{q}}dx\right)^{{1}/{p}}
	\end{split}
	\ee
	is finite.
	We write briefly $W^{p,q}$ for the case $m(x,\xi)\equiv 1$.
	We also use the notation $W^{p,q}_s$ for the case $m=1\otimes v_s$.
\end{definition}

Recall that there is a close connection between modulation and Wiener amalgam spaces.

\begin{lemma}\label{lm-eq-W-M}
	Denote $\wt{m}(x,\xi):=m(\xi,-x)$. We have $\wpqm=\calF M^{q,p}_{\wt{m}}$. More precisely, the following relation is valid:
	\be
	\|f\|_{\wpqm}\sim \|V_{\phi}f(x,\xi)m(x,\xi)\|_{L^{q,p}_{\xi,x}}=\|V_{\check{\phi}}\check{f}(x,\xi)m(\xi,-x)\|_{L^{q,p}_{x,\xi}}\sim \|\check{f}\|_{M^{q,p}_{\wt{m}}}.
	\ee
\end{lemma}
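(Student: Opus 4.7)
The plan is to prove Lemma \ref{lm-eq-W-M} purely by unfolding definitions and applying the fundamental identity of time-frequency analysis (equation \eqref{fid}) together with one change of variables. No deep analysis should be required.

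First I would observe that the leftmost $\sim$ is immediate from definitions: by Definition \ref{df-W}, $\|f\|_{\wpqm}=\|V_\phi f\|_{L^{(p,q)}_m(\rdd)}$, and this mixed-norm is exactly $\|V_\phi f(x,\xi)m(x,\xi)\|_{L^{q,p}_{\xi,x}}$ after writing out the iterated integrals, i.e.\ $L^q$ in $\xi$ inside, $L^p$ in $x$ outside. So really $\sim$ here can be replaced by $=$ (up to the usual window-equivalence constant when a different window is eventually used).

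The central equality is the heart of the argument. I would apply the fundamental identity \eqref{fid} with the window $g=\check\phi$ and the function $\check f$ in place of $f$. Since $\widehat{\check\phi}=\phi$ and $\widehat{\check f}=f$, this gives
\be
V_{\check\phi}\check f(x,\xi)=e^{-2\pi i x\cdot\xi}\,V_\phi f(\xi,-x),
\ee
so in particular $|V_{\check\phi}\check f(x,\xi)|=|V_\phi f(\xi,-x)|$. Plugging this into the right-hand mixed-norm
\be
\|V_{\check\phi}\check f(x,\xi)\,m(\xi,-x)\|_{L^{q,p}_{x,\xi}}
=\left(\int_{\rd}\Bigl(\int_{\rd}|V_\phi f(\xi,-x)|^q m(\xi,-x)^q\,dx\Bigr)^{p/q}\,d\xi\right)^{1/p},
\ee
a change of variable $y=-x$ in the inner integral followed by relabeling $\xi\mapsto u$, $y\mapsto v$ turns the expression into $\|V_\phi f(u,v)m(u,v)\|_{L^{q,p}_{v,u}}$, which matches the middle expression. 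This establishes the central equality.

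For the last $\sim$, apply Definition \ref{df-M} of the modulation space with the window $\check\phi\in\calS(\rd)\setminus\{0\}$, which is admissible (any Schwartz window is admissible for $\mpqm$ in the sense of Definition \ref{D.window-class}). Then
\be
\|\check f\|_{M^{q,p}_{\tilde m}}
\sim \|V_{\check\phi}\check f\|_{L^{q,p}_{\tilde m}(\rdd)}
=\|V_{\check\phi}\check f(x,\xi)\,m(\xi,-x)\|_{L^{q,p}_{x,\xi}},
\ee
because $\tilde m(x,\xi)=m(\xi,-x)$ by definition. This closes the chain.

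There is no real obstacle beyond careful bookkeeping of which index is inner vs.\ outer in the mixed-norms $L^{(p,q)}$, $L^{p,q}_{x,\xi}$ and $L^{q,p}_{\xi,x}$, and the sign in the change of variable $y=-x$. The heavy lifting is done once \eqref{fid} is invoked with the correct window and function. The same reasoning also justifies the identity $\wpqm=\calF M^{q,p}_{\tilde m}$ at the level of spaces, since $f\in\wpqm$ iff $\check f\in M^{q,p}_{\tilde m}$, i.e.\ $f\in\calF M^{q,p}_{\tilde m}$.
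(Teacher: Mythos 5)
Your proof is correct: the paper states this lemma without proof (recalling it as a known connection), and your argument via the fundamental identity \eqref{fid} applied to the window $\check\phi$ and the function $\check f$, followed by the change of variables $y=-x$ and careful tracking of inner/outer exponents, is exactly the standard and intended derivation. The only point worth making explicit is that $\check\phi\in\calS(\rd)\setminus\{0\}$ is an admissible window for $M^{q,p}_{\wt m}$ in the full range $0<p,q\leq\infty$ (which you note, and which follows from Definition \ref{D.window-class}), so the final $\sim$ is legitimate window-independence rather than a hidden assumption.
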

Due to the above relation, many conclusions can be automatically converted from modulation spaces to Wiener amalgam spaces.
For instance, the definition of $\wpqm$ is independent of the window $g\in \calF\mathfrak{M}_{\wt{v}}^{q,p}$, where  $\wt{v}(x,\xi):=v(\xi,-x)$.

\subsection{Gabor analysis on modulation and Wiener amalgam spaces}
In this subsection, we recall an important time-frequency tool on modulation and Wiener amalgam spaces.
In fact, the functions or distributions in modulation and Wiener amalgam spaces can be characterized by the summability and decay properties of their Gabor coefficients.
We first list some important operators with their basic properties.

\begin{definition}
	Assume that $g,\g\in L^2(\bbR^d)$, $\al,\b>0$ and $\Ga=\a\bbZ^d\times\b\bbZ^d$. The coefficient operator or analysis operator $C^{\a,\b}_g$
	is defined by
	\be
	C^{\a,\b}_gf=\{\langle f, M_{\b n}T_{\a k}g \rangle\}_{k,n\in\bbZ^d}.
	\ee
	The synthesis operator or reconstruction operator $D^{\a,\b}_{\g}$ is defined by
	\be
	D^{\a,\b}_{\g}\vec{c}=\sum_{k\in \bbZ^d}\sum_{n\in \bbZ^d} c_{k,n}M_{\b n}T_{\a k}\g.
	\ee
	The Gabor frame operator $S^{\a,\b}_{g,\g}$ is defined by
	\be
	S^{\a,\b}_{g,\g}f
	 =D^{\a,\b}_{\g}C^{\a,\b}_gf
	 =\sum_{k\in \bbZ^d}\sum_{n\in \bbZ^d} \langle f,  M_{\b n}T_{\a k}g \rangle M_{\b n}T_{\a k}\g.
	\ee
	We also use $C_g$, $D_{\g}$ and $S_{g,\g}$ for short, if the parameters $\al$ and $\b$ are implicit.
\end{definition}

We remark that the definitions of $C_g$ and $D_{\g}$ can be extended if the window functions $g$ and $\g$ are taken suitably.
Next, we give the definitions of discrete mixed-norm spaces.

\begin{definition}[Discrete mixed-norm spaces]
	Let $0<p,q\leq \fy$, $m\in \scrP(\rdd)$.
	The space $l^{p,q}_m(\zdd)$ consists of all sequences $\vec{a}=\{a_{k,n}\}_{k,n\in \zd}$ for which the (quasi-)norm
	\be
	\|\vec{a}\|_{l^{p,q}_m(\zdd)}=\left(\sum_{n\in \zd}\left(\sum_{k\in \zd}|a_{k,n}|^pm(k,n)^p\right)^{q/p}\right)^{1/q}
	\ee
	is finite, with the usual modification when $p=\i$ or $q=\i$. We also use the notation
	\be
	\|\vec{a}\|_{l^{(p,q)}_m(\zdd)}=\left(\sum_{k\in \zd}\left(\sum_{n\in \zd}|a_{k,n}|^qm(k,n)^q\right)^{p/q}\right)^{1/p}.
	\ee
	Then the mixed-norm space $l^{(p,q)}_m(\zdd)$ can be defined by the similar way. For a continuous function $F$ on $\rdd$, we define
	\be
	\|F\|_{\l^{p,q}(\a\bbZ^d\times\b\bbZ^d)}=\|(F(\al k,\b n))_{k,n}\|_{l^{p,q}(\zdd)},\ \ \ 
	\|F\|_{\l^{(p,q)}(\a\bbZ^d\times\b\bbZ^d)}=\|(F(\al k,\b n))_{k,n}\|_{l^{(p,q)}(\zdd)}.
	\ee
\end{definition}
%
Based on the admissible window class mentioned above, we recall the boundedness of $C_g$ and $D_{g}$,
which works on the full range $p,q\in (0,\fy]$, see \cite{GalperinSamarah2004ACHA} for more details.

\begin{lemma}\label{lm-bdM-CgDg}
	Assume that $m$ is $v$-moderate, $p,q\in (0,\fy]$, and $g$ belongs to the subclass $M^{p_1}_{\om_{r_1,s_1}}$ of $\mathfrak{M}^{p,q}_v$.
  For all lattice constants $\a, \b>0$, we have
   \[
      \|C_gf(k,n)m(\a k, \b n)\|_{l^{p,q}}
       =\|V_gf(\a k, \b n)m(\a k, \b n)\|_{l^{p,q}}
      \lesssim \|V_gf\|_{L^{p,q}_m}
      \sim \|f\|_{\mpqm},
   \] 
   and
   \[
     \|D_g\vec{c}\|_{\mpqm}\lesssim \| c_{k,n}m(\a k, \b n) \|_{l^{p,q}}
   \]
   independently of $p,q,m$.
\end{lemma}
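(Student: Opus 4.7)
The plan is to follow the standard Gabor analysis machinery on (quasi-)Banach modulation spaces developed in \cite{GalperinSamarah2004ACHA}. The two asserted inequalities are dual facets of the same underlying sampling and convolution estimates for the STFT, and both reduce to comparing a discrete $l^{p,q}$ norm over the time-frequency lattice $\a\zd\times\b\zd$ with a continuous $L^{p,q}_m$ norm of the STFT.

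For the analysis operator, the first identity $\|C_gf(k,n)m(\a k,\b n)\|_{l^{p,q}}=\|V_gf(\a k,\b n)m(\a k,\b n)\|_{l^{p,q}}$ is immediate from the definition of $C_g$ as sampling of $V_gf$. The substantive bound is the domination of the sampled STFT by its full continuous norm. The key tool I will use is the pointwise convolution inequality $|V_gf(z)|\lesssim (|V_{g_0}f|\ast|V_{g_0}g|)(z)$ for a fixed auxiliary Gaussian window $g_0\in\calS(\rd)$. Combined with a Plancherel--Polya type sampling lemma, which exploits the local regularity of $|V_{g_0}f|$ together with the weighted integrability $|V_{g_0}g|\in L^{1}_v$ granted by the hypothesis $g\in M^{p_1}_{\om_{r_1,s_1}}$, this yields the passage from the discrete lattice norm to $\|V_gf\|_{L^{p,q}_m}$. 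The final equivalence with $\|f\|_{\mpqm}$ is the window-independence statement already recorded from \cite{GalperinSamarah2004ACHA}.

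For the synthesis operator, I would compute $V_{\phi}(D_g\vec{c})(x,\xi)$ against a Schwartz window $\phi$ by exchanging sum and STFT, and then invoke the covariance identity $|V_{\phi}(\pi(\a k,\b n)g)(x,\xi)|=|V_{\phi}g(x-\a k,\xi-\b n)|$. This expresses $|V_{\phi}(D_g\vec{c})(x,\xi)|$ as a discrete convolution over the lattice of shifts of $|V_{\phi}g|$ weighted by $|c_{k,n}|$. The $v$-moderateness inequality $m(x,\xi)\leq C\,v(x-\a k,\xi-\b n)m(\a k,\b n)$ transfers the weight from the point $(x,\xi)$ to the lattice point, after which the estimate reduces to a mixed discrete-continuous convolution inequality between the sequence $\{c_{k,n}m(\a k,\b n)\}$ in $l^{p,q}$ and $v|V_{\phi}g|$ in a suitable weighted Wiener space.

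The main obstacle is the quasi-Banach regime $\min(p,q)<1$, where Minkowski's inequality fails and must be replaced by the $r$-triangle inequality with $r=\min(1,p,q)$. Both the sampling step and the convolution step then have to be recast in an $L^r$ or $l^r$ framework, and this is precisely the reason for the strong decay condition on $g$ encoded in Definition \ref{D.window-class}: the polynomial weights with $r_1>d/r$ and $s_1>d/s$ are calibrated so that the requisite sums of translates of $|V_{\phi}g|$ remain $l^r$-summable. Once this quasi-Banach convolution machinery is in place, both bounds follow simultaneously and uniformly in $\a,\b$ over any bounded range of lattice parameters.
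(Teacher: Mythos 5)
Your outline is correct and coincides with the argument the paper relies on: the paper does not prove this lemma itself but recalls it from \cite{GalperinSamarah2004ACHA}, and the proof there runs exactly as you describe --- the change-of-window convolution bound $|V_gf|\lesssim |V_{g_0}f|\ast|V_{g_0}g|$ together with a Plancherel--Polya type sampling estimate in the amalgam space $W(L^\infty,L^{p,q}_m)$ for $C_g$, and the covariance identity reducing $D_g\vec{c}$ to a discrete convolution of $\{c_{k,n}m(\a k,\b n)\}$ against translates of $|V_\phi g|$, with the $r$-triangle inequality for $r=\min(1,p,q)$ and the calibrated weights $r_1>d/r$, $s_1>d/s$ handling the quasi-Banach range. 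I see no gaps in your sketch.
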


Now, we recall the Gabor characterization of modulation space in \cite{GalperinSamarah2004ACHA}.

\begin{lemma}[Gabor characterization of $\mpqm$ with $0<p,q\leq\i$]\label{lm-gc-M}
	\cite[Theorem 3.7]{GalperinSamarah2004ACHA}
	Assume that $m$ is $v$-moderate on $\bbR^{2d}$, $p,q\in (0,\fy]$, $g,\g\in \mathfrak{M}^{p,q}_v$, and that the Gabor frame operator
	$S_{g,\g}=D_{\g}C_g=I$ on $L^2(\bbR^d)$. Then
	\begin{equation*}
	  f
	  =\sum_{k\in \bbZ^d}\sum_{n\in \bbZ^d} \langle f, M_{\b n}T_{\a k}g \rangle  M_{\b n}T_{\a k}\g
	  =\sum_{k\in \bbZ^d}\sum_{n\in \bbZ^d} \langle f, M_{\b n}T_{\a k}\g \rangle  M_{\b n}T_{\a k}g
	\end{equation*}
	with unconditional convergence in $M^{p,q}_m$ if $p,q<\fy$, and with weak-star convergence in $M^{\fy}_{1/v}$ otherwise.
	Furthermore there are constants $A,B>0$ such that for all $f\in M^{p,q}_m$
	\be
	A\|f\|_{M^{p,q}_m}
	\leq
	\left\|  V_gf\cdot m \right\|_{\l^{p,q}(\a\bbZ^d\times\b\bbZ^d)}
	\leq
	B\|f\|_{M^{p,q}_m}.
	\ee
\end{lemma}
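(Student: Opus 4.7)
The plan is to combine the analysis/synthesis boundedness in Lemma \ref{lm-bdM-CgDg} with the hypothesis that the Gabor frame operator $S_{g,\g}=D_\g C_g=I$ on $L^2(\rd)$. First I would establish the norm equivalence. The upper inequality
\[
\|V_gf\cdot m\|_{\l^{p,q}(\a\bbZ^d\times\b\bbZ^d)}\leq B\|f\|_{\mpqm}
\]
is exactly the analysis half of Lemma \ref{lm-bdM-CgDg}, read off by sampling the continuous STFT on the lattice $\a\bbZ^d\times\b\bbZ^d$. For the matching lower bound I would feed the hypothesis $S_{g,\g}=I$ into the synthesis half of the same lemma: writing $f=D_\g C_g f$ and using the boundedness of $D_\g:l^{p,q}_m(\zdd)\to \mpqm$ gives
\[
\|f\|_{\mpqm}=\|D_\g C_g f\|_{\mpqm}\lesssim \|C_g f\cdot m\|_{l^{p,q}(\zdd)}=\|V_gf\cdot m\|_{\l^{p,q}(\a\bbZ^d\times\b\bbZ^d)}.
\]
The dual reconstruction (roles of $g$ and $\g$ swapped) follows from the $L^2$-self-adjointness relation $S_{g,\g}^*=S_{\g,g}=I$, so the same two estimates apply symmetrically.

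Next I would handle convergence. For $p,q<\fy$, finitely supported sequences are dense in $l^{p,q}_m(\zdd)$, so continuity of $D_\g$ upgrades the $L^2$-convergence of the truncated Gabor sums to unconditional convergence in $\mpqm$. When $p=\fy$ or $q=\fy$, the space $\mpqm$ is not separable and norm convergence is hopeless; instead, I would test the partial sums against windows in $M^1_v$, and use continuity of $C_g$ on $M^1_v$ together with the duality $(M^1_v)^*=\mfd_{1/v}$ to obtain weak-star convergence in $\mfd_{1/v}$, matching the stated conclusion.

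The main obstacle is the quasi-Banach range $0<p,q<1$, where the triangle inequality degrades to a $p$-triangle inequality and the usual duality-based arguments for Gabor frames fail. The admissible window class $\mathfrak{M}^{p,q}_v$ is designed precisely to compensate: if $g,\g\in \mathfrak{M}^{p,q}_v$ then the cross-Gramian $|\langle M_{\b n'}T_{\a k'}g,M_{\b n}T_{\a k}\g\rangle|$ decays fast enough on $\zdd\times\zdd$ to be absolutely summable in the $l^{p,q}_m$-sense, so that the estimates of Lemma \ref{lm-bdM-CgDg} reduce to discrete Young-type inequalities on weighted sequence spaces. Once these master inequalities are in hand, the two bounds above close the proof and the independence from the particular admissible pair $(g,\g)$ is automatic by exchanging $g\leftrightarrow \g$.
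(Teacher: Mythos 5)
Your sketch is correct and follows essentially the same route as the source: the paper does not prove this lemma itself but quotes it verbatim from Galperin--Samarah (Theorem 3.7 of the cited reference), and your argument --- analysis bound by sampling, lower bound via $f=D_{\g}C_gf$ and the synthesis bound, adjoint symmetry $S_{g,\g}^*=S_{\g,g}$ for the dual expansion, density of finite sequences for unconditional convergence and $(M^1_v)^*$-duality for the weak-star endpoint --- is precisely the structure of that proof. The only point worth making explicit is the extension of the identity $S_{g,\g}=I$ from $L^2$ to $M^{p,q}_m$ (by density of finite Gabor sums when $p,q<\fy$ and by weak-star continuity otherwise), which you use implicitly when writing $f=D_{\g}C_gf$ for general $f\in M^{p,q}_m$.
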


Next, we list the corresponding results on Wiener amalgam spaces, which can be verified by using Lemmas \ref{lm-bdM-CgDg} and \ref{lm-gc-M}
and the fundamental identity \eqref{fid}
directly.
We omit the proof here.

\begin{lemma}\label{lm-bdW-CgDg} 
	Assume that $m$ is $v$-moderate, $p,q\in (0,\fy]$, and $g$ belongs to the subclass $\scrF M^{p_1}_{\om_{r_1,s_1}}$ of $\scrF \mathfrak{M}^{q,p}_{\wt{v}}$, where  $\wt{v}(x,\xi):=v(\xi,-x)$.
	For all lattice constants $\a, \b>0$, we have
	\[
	\|C_gf(k,n)m(\a k, \b n)\|_{l^{q,p}_{n,k}} 
	=\|V_gf(\a k, \b n)m(\a k, \b n)\|_{l^{q,p}_{n,k}}
	\lesssim \|V_gf(x,w)m(x,w)\|_{L^{q,p}_{w,x}}
	\sim \|f\|_{\wpqm}.
	\] 
	and
		\[
	\|D_gc\|_{\wpqm}\lesssim \| c(k,n)m(\a k, \b n) \|_{l^{q,p}_{n,k}}
	\]
	independently of $p,q,m$.
\end{lemma}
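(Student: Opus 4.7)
The strategy is to transfer the estimates from the modulation setting to the Wiener amalgam setting by Fourier duality, exploiting Lemma \ref{lm-eq-W-M} and the fundamental identity \eqref{fid}. Before touching anything, I would verify the bookkeeping on the data side: since $m$ is $v$-moderate, a direct computation gives that $\widetilde{m}(x,\xi):=m(\xi,-x)$ is $\widetilde{v}$-moderate, where $\widetilde{v}(x,\xi)=v(\xi,-x)$. Moreover, by hypothesis $g\in \scrF M^{p_1}_{\om_{r_1,s_1}}$, so $\check{g}\in M^{p_1}_{\om_{r_1,s_1}}\subset\mathfrak{M}^{q,p}_{\widetilde{v}}$ is a legitimate window for $M^{q,p}_{\widetilde{m}}$, which is the precondition to invoke Lemma \ref{lm-bdM-CgDg} with exponents $(q,p)$ and weight $\widetilde{m}$.

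For the first estimate, the plan is to write $V_gf(\a k,\b n)$ in terms of $V_{\check g}\check f$ via \eqref{fid}, obtaining $|V_gf(\a k,\b n)|=|V_{\check g}\check f(\b n,-\a k)|$. After the change of summation index $k\mapsto -k$ (which leaves $l^{q,p}$-norms invariant and only swaps $m$ with $\widetilde{m}$ under the identity $m(\a k,\b n)=\widetilde{m}(\b n,-\a k)$), the left-hand side becomes exactly the $l^{q,p}$-norm of the sampled short-time Fourier transform of $\check f$ with window $\check g$ on the lattice $\b\bbZ^d\times\a\bbZ^d$, weighted by $\widetilde{m}$. Applying Lemma \ref{lm-bdM-CgDg} to $\check f$ in $M^{q,p}_{\widetilde{m}}$ bounds this by $\|\check f\|_{M^{q,p}_{\widetilde{m}}}$, and Lemma \ref{lm-eq-W-M} identifies this with $\|f\|_{\wpqm}$, closing the chain.

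For the synthesis estimate, the plan is again to push everything through Fourier duality. A direct calculation of the inverse Fourier transform of a time-frequency shift shows
\be
(M_{\b n}T_{\a k}g)^{\vee}(x)=e^{2\pi i\a k(x+\b n)}\hat g(x+\b n),
\ee
so $(D_gc)^{\vee}$ equals, up to unimodular phase factors absorbed into the coefficients and a reflection of the index, the modulation-space synthesis operator $D^{\b,\a}_{\check g}$ applied to a rearranged sequence $c'$ with $|c'_{n,k}|=|c_{k,-n}|$. Applying the modulation synthesis bound from Lemma \ref{lm-bdM-CgDg} to this reflected data, and then converting $\|(D_gc)^{\vee}\|_{M^{q,p}_{\widetilde{m}}}\sim \|D_gc\|_{\wpqm}$ via Lemma \ref{lm-eq-W-M}, yields the desired inequality, since the $l^{q,p}_{\widetilde{m}(\b\cdot,\a\cdot)}$-norm of $c'$ equals $\|c_{k,n}m(\a k,\b n)\|_{l^{q,p}_{n,k}}$ after relabeling.

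No deep obstacle is expected; the entire argument is essentially a translation via the Fourier transform, which is an isometry-up-to-equivalence between the Wiener amalgam and modulation scales. The only delicate step is the bookkeeping: one must keep track of (i) the switch of role between time and frequency variables, (ii) the reflection $(x,\xi)\mapsto(\xi,-x)$ turning $m$ into $\widetilde{m}$, (iii) the swap of lattice parameters $\a\leftrightarrow\b$ (which is harmless since both constants are arbitrary positive), and (iv) the unimodular phase factors produced by \eqref{fid} and by the Fourier transform of $M_{\b n}T_{\a k}g$, which drop out as soon as moduli are taken.
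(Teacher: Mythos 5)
Your plan matches the paper's intended argument exactly: the paper omits this proof, stating only that the lemma "can be verified by using Lemmas \ref{lm-bdM-CgDg} and \ref{lm-gc-M} and the fundamental identity \eqref{fid} directly," which is precisely the Fourier-duality transfer to $\check f\in M^{q,p}_{\wt m}$ with window $\check g$ on the swapped lattice $\b\bbZ^d\times\a\bbZ^d$ that you carry out for both the sampling and the synthesis bounds. The only blemish is a harmless sign slip in the bookkeeping (the identity \eqref{fid} gives $|V_gf(\a k,\b n)|=|V_{\check g}\check f(-\b n,\a k)|$ and $m(\a k,\b n)=\wt{m}(-\b n,\a k)$, rather than the reflections you wrote, and similarly $\check g$ rather than $\hat g$ in the formula for $(M_{\b n}T_{\a k}g)^{\vee}$), which does not affect the argument since the index reflection leaves the $l^{q,p}$ norms invariant once applied consistently to the coefficients and the weight.
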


\begin{lemma}[Gabor frames for Wiener spaces $\wpqm$ with $0<p,q\leq\i$]\label{lm-Gabor-Wpqm}
	Assume that $m$ is $v$-moderate on $\bbR^{2d}$, $p,q\in (0,\fy]$, $g, \g\in \scrF\mathfrak{M}^{q,p}_{\wt{v}}$ with $\wt{v}(x,\xi):=v(\xi,-x)$, and that the Gabor frame operator
	$S_{g,\g}=D_{\g}C_g=I$ on $L^2(\bbR^d)$. Then
	\begin{equation*}
		f
		=\sum_{k\in \bbZ^d}\sum_{n\in \bbZ^d} \langle f, M_{\b n}T_{\a k}g \rangle  M_{\b n}T_{\a k}\g
		=\sum_{k\in \bbZ^d}\sum_{n\in \bbZ^d} \langle f, M_{\b n}T_{\a k}\g \rangle  M_{\b n}T_{\a k}g
	\end{equation*}
	with unconditional convergence in $\wpqm$ if $p,q<\fy$, and with weak-star convergence in $W^{\fy}_{1/v}$ otherwise.
	Furthermore there are constants $A,B>0$ such that for all $f\in W^{p,q}_m$
	\be
	A\|f\|_{\wpqm}
	\leq
	\left\|  V_gf\cdot m \right\|_{\l^{(p,q)}(\a\bbZ^d\times\b\bbZ^d)}
	\leq
	B\|f\|_{\wpqm}.
	\ee
\end{lemma}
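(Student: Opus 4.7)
The plan is to reduce the lemma to its modulation-space counterpart (Lemma \ref{lm-gc-M}) through the Fourier identification $\wpqm = \calF M^{q,p}_{\wt m}$ in Lemma \ref{lm-eq-W-M}, together with the fundamental identity \eqref{fid} and the operator bounds in Lemma \ref{lm-bdW-CgDg}. Every object in the claim for $f\in\wpqm$ has a Fourier-dual counterpart for $\check f\in M^{q,p}_{\wt m}$, and we transport the known modulation-space statement back across $\calF$.

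For the setup, I would note that $g,\g\in\scrF\mathfrak{M}^{q,p}_{\wt v}$ means precisely $\check g,\check\g\in\mathfrak{M}^{q,p}_{\wt v}$, so they are admissible windows for $M^{q,p}_{\wt m}$. The $L^2$ identity $S^{\a,\b}_{g,\g}=I$ translates, via Plancherel and the standard commutation between $\calF$ and the time-frequency shift $\pi(z)$, into $S^{\b,\a}_{\check g,\check\g}=I$ on $L^2(\rd)$; here the lattice parameters swap because the fundamental identity \eqref{fid} swaps the positions of $x$ and $\xi$.

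For the two-sided norm bound, the fundamental identity evaluated on the lattice gives $|V_gf(\a k,\b n)|=|V_{\hat g}\hat f(\b n,-\a k)|$, while the definition $\wt m(x,\xi)=m(\xi,-x)$ yields $m(\a k,\b n)=\wt m(\b n,-\a k)$. Re-indexing $k\mapsto -k$ and swapping the order of summation converts $\|V_gf\cdot m\|_{\l^{(p,q)}(\a\zd\times\b\zd)}$ into $\|V_{\hat g}\hat f\cdot\wt m\|_{\l^{q,p}(\b\zd\times\a\zd)}$, which Lemma \ref{lm-gc-M} identifies with $\|\hat f\|_{M^{q,p}_{\wt m}}\sim\|f\|_{\wpqm}$ (modulo the harmless reflection relating $\hat f$ and $\check f$).

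For the series expansion, I would apply Lemma \ref{lm-gc-M} to $\check f\in M^{q,p}_{\wt m}$, obtaining a Gabor expansion with unconditional convergence when $p,q<\fy$ and weak-star convergence in $M^{\fy}_{1/\wt v}$ otherwise, then take $\calF$ of both sides. Using $\calF(M_aT_b\check\g)=T_aM_{-b}\g$ and rewriting each coefficient through \eqref{fid} as $e^{\pm 2\pi i\a k\b n}\langle f,M_{\b n}T_{\a k}g\rangle$, one arrives at the stated expansion for $f$. Convergence modes transfer because $\calF$ is a topological isomorphism between the two sides, and $W^{\fy}_{1/v}=\calF M^{\fy}_{1/\wt v}$ identifies the endpoint weak-star topologies. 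The main obstacle will be the bookkeeping: tracking the sign and phase factors produced by $\calF\pi(z)=c(z)\pi(z')\calF$ and verifying that the lattice swap $\a\leftrightarrow\b$ together with the window switch $g\mapsto\check g$ preserves the Gabor frame identity on $L^2$, so that Lemma \ref{lm-gc-M} truly applies with the correct parameters. A secondary delicate point is the endpoint case, where one must identify the duality pairings on $W^{\fy}_{1/v}$ and $M^{\fy}_{1/\wt v}$ via Parseval in order to transport the weak-star convergence through $\calF$ cleanly.
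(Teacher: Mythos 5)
Your proposal is correct and follows exactly the route the paper indicates: the paper omits the proof, stating only that the lemma ``can be verified by using Lemmas \ref{lm-bdM-CgDg} and \ref{lm-gc-M} and the fundamental identity \eqref{fid} directly,'' which is precisely your reduction of the Wiener amalgam statement to its modulation-space counterpart via $\wpqm=\calF M^{q,p}_{\wt m}$, with the attendant swap of lattice parameters, windows, and weight. Your bookkeeping of the phase factors, the reflection relating $\hat f$ and $\check f$, and the transfer of the convergence modes through $\calF$ is consistent with what the omitted proof would require.
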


The following well known theorem provides a way to find the Gabor frame of $L^2(\bbR^d)$.
Recall that $\|g\|_{W(L^{\fy},L^{1})(\bbR^d)}=\sum_{n\in \bbZ^d}\|g\chi_{Q+n}\|_{L^{\fy}}$ with $Q=[0,1]^d$.
\begin{lemma}(Walnut \cite{Walnut1992JMAA})\label{lm-frame-L2}
	Suppose that $g\in W(L^{\fy},L^{1})(\bbR^d)$ satisfying
	\be
	A\leq \sum_{k\in \bbZ^d}|g(x-\al k)|^2\leq B\ \ \ \ a.e.
	\ee
	for constants $A,B\in (0,\fy)$.
	Then there exists a constant $\b_0$ depending on $\al$ such that
	$\mathcal {G}(g,\al,\b):= \{T_{\al k}M_{\b n}g\}_{k,n\in \bbZ^d}$ is a Gabor frame of $L^2(\bbR^d)$ for all $\b\leq \b_0$.
\end{lemma}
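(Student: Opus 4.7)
The plan is to realize $\mathcal{G}(g,\alpha,\beta)$ as a frame by proving that the associated Gabor frame operator
\[
S_{g,g}f=\sum_{k,n\in\bbZ^d}\langle f,T_{\alpha k}M_{\beta n}g\rangle\, T_{\alpha k}M_{\beta n}g
\]
is bounded and boundedly invertible on $L^2(\bbR^d)$ for every sufficiently small $\beta>0$; by the standard Hilbert-space frame theory this is equivalent to $\mathcal{G}(g,\alpha,\beta)$ being a Gabor frame.

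The central device is the Walnut representation: for $f\in\calS(\bbR^d)$, writing out the bracket and applying Poisson summation in the variable $n$ (valid because the $W(L^{\infty},L^{1})$ hypothesis forces the pertinent sums in $k$ to converge absolutely and locally uniformly) gives
\[
S_{g,g}f(x)=\beta^{-d}\sum_{n\in\bbZ^d}G_n(x)\,f(x-n/\beta),\qquad G_n(x):=\sum_{k\in\bbZ^d}g(x-\alpha k)\,\overline{g(x-\alpha k-n/\beta)}.
\]
Each $G_n$ is $\alpha\bbZ^d$-periodic and $L^\infty$-bounded, again because $g\in W(L^{\infty},L^{1})$.

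I would then split $S_{g,g}=D_\beta+R_\beta$, where $D_\beta f(x):=\beta^{-d}G_0(x)f(x)$ is the $n=0$ (diagonal) piece and $R_\beta$ collects the off-diagonal terms. The assumption $A\le G_0(x)=\sum_k|g(x-\alpha k)|^2\le B$ makes $D_\beta$ a multiplication operator with $\|D_\beta\|\le \beta^{-d}B$ and $\|D_\beta^{-1}\|\le \beta^d/A$, hence bounded and boundedly invertible on $L^2$. For the remainder, since translations are $L^2$-isometries,
\[
\|R_\beta\|_{L^2\to L^2}\le \beta^{-d}\sum_{n\neq 0}\|G_n\|_{L^\infty}.
\]

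The principal obstacle will be showing that
\[
\sum_{n\neq 0}\|G_n\|_{L^\infty}\;\xrightarrow[\beta\to 0^+]{}\;0.
\]
Setting $a_j:=\|g\chi_{Q+j}\|_{L^\infty}$, the $W(L^{\infty},L^{1})$ hypothesis gives $\{a_j\}\in \ell^1(\bbZ^d)$. Dominating $|g(y)|\le \sum_j a_j\chi_{Q+j}(y)$ pointwise and noting that, for fixed $x$ and fixed shift $s=n/\beta$, only $O(\alpha^{-d})$ lattice points $\alpha k$ keep both $\chi_{Q+j_1}(x-\alpha k)$ and $\chi_{Q+j_2}(x-\alpha k-s)$ nonzero, one arrives at a bound of the convolution type
\[
\|G_n\|_{L^\infty}\le C_\alpha\sum_{j\in\bbZ^d}a_j\,a^\ast_{j-\lfloor n/\beta\rfloor},
\]
where $a^\ast_l$ is $a_l$ smeared over an $O(1)$ neighborhood. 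The total $\sum_{n\neq 0}\|G_n\|_{L^\infty}$ is then finite uniformly in $\beta$, and a standard tail argument (for any $\varepsilon>0$ take $N$ with $\sum_{|j|>N}a_j<\varepsilon$, then choose $\beta$ so small that $|n/\beta|>2N$ for all $n\neq 0$) shows it tends to $0$ as $\beta\to 0^+$.

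Combining these ingredients, for all $\beta$ below a threshold $\beta_0=\beta_0(\alpha,A,B,g)$ one has $\|D_\beta^{-1}R_\beta\|_{L^2\to L^2}<1$, so $S_{g,g}=D_\beta(I+D_\beta^{-1}R_\beta)$ is invertible on $L^2(\bbR^d)$ by a Neumann series. Equivalently there exist frame bounds $A'\le S_{g,g}\le B'$, and $\mathcal{G}(g,\alpha,\beta)$ is a Gabor frame for $L^2(\bbR^d)$ as claimed.
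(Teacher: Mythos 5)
Your argument is correct, and it is essentially the classical proof of this result: the paper itself offers no proof (the lemma is quoted from Walnut's work), and the route you take --- the Walnut representation $S_{g,g}f=\beta^{-d}\sum_n G_n\,T_{n/\beta}f$, isolating the invertible multiplication operator $\beta^{-d}G_0$ via the hypothesis $A\le G_0\le B$, and killing the off-diagonal part by showing $\sum_{n\neq0}\|G_n\|_{L^\infty}\to0$ as $\beta\to0^+$ using the $\ell^1$ summability of $a_j=\|g\chi_{Q+j}\|_{L^\infty}$ --- is exactly the argument in Walnut's paper and in Gr\"ochenig's book. The only points worth making explicit in a full write-up are the justification of Poisson summation for $W(L^{\infty},L^1)$ windows (which you flag) and the observation that $S_{g,g}\ge0$, so bounded invertibility indeed yields frame bounds.
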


In order to find the dual window in a suitable function space, we recall the following conclusion.
\begin{lemma}(\cite{Groechenig2001})\label{lm-frame-invertible}
	Assume that $g\in M^1_v(\rd)$ and that $\{T_{\al k}M_{\b n}g\}_{k,n\in \zd}$ is a Gabor frame for $L^2(\rd)$.
	Then the Gabor frame operator $S_{g,g}^{\al,\b}$ is invertible on $M^1_v(\rd)$. As a consequence, $S_{g,g}^{\al,\b}$
	is invertible on all modulation spaces $M^{p,q}_m(\rd)$ for $1\leq p,q\leq \fy$ and $m\in \scrP(\rdd)$.
\end{lemma}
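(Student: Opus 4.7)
The plan is to reduce invertibility to a noncommutative Wiener-type spectral invariance theorem for an operator algebra generated by time-frequency shifts, using Janssen's representation of the frame operator as the bridge.

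First, I would invoke Janssen's representation: for $g\in M^{1}_{v}(\rd)$ and positive lattice constants $\alpha,\beta$, the frame operator admits the absolutely convergent expansion
\begin{equation*}
S^{\alpha,\beta}_{g,g} \;=\; \frac{1}{(\alpha\beta)^{d}}\sum_{k,n\in\zd} V_{g}g\!\left(\tfrac{n}{\beta},\tfrac{k}{\alpha}\right)\,\pi\!\left(\tfrac{n}{\beta},\tfrac{k}{\alpha}\right).
\end{equation*}
The coefficients are samples of $V_{g}g$ on the adjoint lattice, and the hypothesis $g\in M^{1}_{v}$ forces $V_{g}g$ to lie in a Wiener amalgam $W(L^{\infty},L^{1}_{v})$, so the obvious sampling embedding gives that the coefficient sequence belongs to $\ell^{1}_{v}(\zdd)$.

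Next, consider the algebra $\calA_{v}$ of operators of the form $T_{\vec c}=\sum_{k,n} c_{k,n}\,\pi(n/\beta,k/\alpha)$ with $\vec c\in\ell^{1}_{v}(\zdd)$. Composition in $\calA_{v}$ corresponds to a twisted convolution at the sequence level, and $\calA_{v}$ is a Banach $\ast$-subalgebra of $\calB(L^{2})$. The key input here is the Gr\"ochenig-Leinert theorem: $\calA_{v}$ is inverse-closed in $\calB(L^{2})$. Since $S^{\alpha,\beta}_{g,g}$ is positive and invertible on $L^{2}$ (directly from the frame inequality) and lies in $\calA_{v}$ by Step~1, its inverse also lies in $\calA_{v}$, i.e.\ $(S^{\alpha,\beta}_{g,g})^{-1}=\sum_{k,n} c'_{k,n}\,\pi(n/\beta,k/\alpha)$ for some $\vec c\,'\in\ell^{1}_{v}(\zdd)$.

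Finally, every element of $\calA_{v}$ is bounded on every $M^{p,q}_{m}(\rd)$ with $1\le p,q\le\infty$ and $m$ $v$-moderate. This is routine from the covariance identity $V_{\phi}(\pi(z)f)(w)=e^{-2\pi i w_{2}\cdot z_{1}}V_{\phi}f(w-z)$, the moderateness bound $m(z+w)\lesssim v(z)m(w)$, and an $\ell^{1}_{v}$-convolution estimate on $L^{p,q}_{m}(\rdd)$. Applied to $(S^{\alpha,\beta}_{g,g})^{-1}$, this yields the boundedness on $M^{1}_{v}$ and on every $M^{p,q}_{m}$ in the stated range. The main obstacle is the spectral-invariance step, which is Gr\"ochenig-Leinert's noncommutative Wiener lemma; everything else is bookkeeping with Janssen's formula and the standard action of time-frequency shifts on modulation spaces.
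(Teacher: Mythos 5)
The paper gives no proof of its own here---the lemma is quoted verbatim from Gr\"ochenig's book---and your argument is exactly the standard proof of the cited result: Janssen's representation with coefficients sampled from $V_gg\in W(L^\infty,L^1_v)$, spectral invariance of the twisted-convolution algebra $\calA_v$, and the $v$-weighted action of time-frequency shifts on $M^{p,q}_m$, so at the level of method there is nothing to distinguish. Two points you should make explicit rather than leave implicit: (i) inverse-closedness of $\calA_v$ in $\calB(L^2(\rd))$ is \emph{false} for general submultiplicative $v$ and requires the GRS condition $\lim_{n\to\infty}v(nz)^{1/n}=1$; here it is rescued only by the paper's standing assumption that $v$ has at most polynomial growth, which you never invoke. (ii) The Gr\"ochenig--Leinert spectral-invariance theorem postdates the cited 2001 book (which proves the rational-$\alpha\beta$ case via a matrix-valued Wiener lemma); the rational case is all this paper actually needs, since in Lemma \ref{lm-eqnorm-wpqm} the lattice constants are reciprocals of integers, but for the lemma in the generality stated you do need the 2004 result.
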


    Based on the above lemmas, we give a lemma used in our proof of Theorem \ref{thm-bd-eq}.
    \begin{lemma}\label{lm-eqnorm-wpqm}
    	Suppose that $p_i,q_i\in (0,\fy]$, $m_i\in \scrP(\rd)$, $i=1,2$. For any $g_1, g_2\in \calS(\rd)\bs \{0\}$, there exists a constant $N\in \bbN$ such that for all $f\in \calS(\rd)$
    	\be
    	\|f\|_{W^{p_i,q_i}_{m_i}} \sim \|V_{g_i}f(\frac{k}{N},\frac{n}{N})m_i(\frac{k}{N},\frac{n}{N})\|_{l^{(p_i,q_i)}(\zdd)}.
    	\ee
    \end{lemma}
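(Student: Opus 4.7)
The plan is to fix once and for all a single reference window $\phi \in \calS(\rd)\bs\{0\}$ that generates a Gabor frame at a sufficiently fine lattice $\frac{1}{N}\bbZ^d\times\frac{1}{N}\bbZ^d$, to deduce the stated equivalence for $\phi$ directly from Lemma \ref{lm-Gabor-Wpqm}, and then to transfer the equivalence to an arbitrary Schwartz window via the standard pointwise window-switching inequality.

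For the first step, I would take $\phi$ to be, say, a Gaussian. By Lemma \ref{lm-frame-L2} together with the invertibility statement of Lemma \ref{lm-frame-invertible}, there is an $N_0\in\bbN$ such that for every $N\geq N_0$ the Gabor system $\{M_{n/N}T_{k/N}\phi\}_{k,n}$ is a frame for $L^2(\rd)$ whose frame operator is invertible on $M^1_v$. Since a Gaussian lies in every admissible class $\scrF\mathfrak{M}^{q_i,p_i}_{\wt{v}_i}$, and the same is true of its canonical dual by Lemma \ref{lm-frame-invertible}, Lemma \ref{lm-Gabor-Wpqm} applies for both $i=1,2$ with this single $N$ and yields
$$\|f\|_{W^{p_i,q_i}_{m_i}}\sim \|V_\phi f(k/N,n/N)\,m_i(k/N,n/N)\|_{l^{(p_i,q_i)}(\zdd)}.$$

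For the second step, to replace $\phi$ by an arbitrary $g_i\in\calS(\rd)\bs\{0\}$, I would use the pointwise window-switching bound $|V_{g_i}f(x,\xi)|\leq \|\phi\|_{L^2}^{-2}(|V_\phi f|*|V_{g_i}\phi|)(x,\xi)$. Sampling at $(k/N,n/N)$, discretizing the continuous convolution by splitting $\rdd$ into cubes of side $1/N$, and using the $v_i$-moderateness of $m_i$, the problem reduces to a discrete (quasi-)Young inequality of the shape
$$\|a*h\|_{l^{(p_i,q_i)}(\zdd)}\lesssim \|a\|_{l^{(p_i,q_i)}(\zdd)}\|h\|_{l^r(\zdd)},\quad r=\min\{1,p_i,q_i\},$$
applied with $a_{j,l}=|V_\phi f(j/N,l/N)|\,m_i(j/N,l/N)$ and $h$ the $v_i$-weighted samples of $V_{g_i}\phi$. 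Since $V_{g_i}\phi\in\calS(\rdd)$ and $v_i$ has polynomial growth, $h\in l^r$ for every $r>0$, and the reverse inequality follows by swapping the roles of $\phi$ and $g_i$.

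The main obstacle I anticipate lies in the quasi-Banach regime $p_i,q_i<1$, where the classical Young inequality is unavailable: the discrete convolution bound has to be replaced by its $r$-normalized analog and verified on a mixed-norm sequence space with a moderate weight. The Schwartz decay of $V_{g_i}\phi$ together with the polynomial growth of $v_i$ keeps the kernel $h$ summable to any power, which is exactly what makes the argument succeed uniformly across the full range $(0,\fy]^2$ and with the same $N$ serving both indices $i=1,2$.
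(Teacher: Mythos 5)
Your upper-bound direction (samples of $V_{g_i}f$ controlled by $\|f\|_{W^{p_i,q_i}_{m_i}}$) is fine --- it is exactly the sampling property of Lemma \ref{lm-bdW-CgDg}, and your discretized quasi-Young convolution argument is the standard way it is proved. The gap is in the reverse inequality, and it is not a repairable detail: your strategy fixes $N$ from the Gaussian $\phi$ alone and then claims the equivalence for \emph{every} $g_i\in\calS(\rd)\setminus\{0\}$ with that same $N$, but this stronger statement is false. Take $d=1$ and $g\in C_c^\infty$ supported in $[0,\tfrac{1}{2N}]$; then for any nonzero $f\in C_c^\infty$ supported in $(\tfrac{1}{2N},\tfrac1N)$ one has $f\cdot\overline{g(\cdot-k/N)}\equiv 0$ for all $k$, hence $V_gf(k/N,\xi)\equiv 0$ for all $k,n$, while $\|f\|_{W^{p,q}_m}>0$. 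So the lattice constant $N$ must depend on $g_1,g_2$, and no window-switching argument starting from a $\phi$-frame at a $\phi$-determined lattice can produce the lower bound.

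The precise step that breaks is the discretization of the swapped inequality $|V_\phi f|\le \|g_i\|_{L^2}^{-2}\,|V_{g_i}f|*|V_\phi g_i|$: to dominate the integral over each cube $Q_{j,l}$ of side $1/N$ by the sample $|V_{g_i}f(j/N,l/N)|$, you need $\sup_{Q_{j,l}}|V_{g_i}f|$ controlled by nearby lattice samples of $V_{g_i}f$ --- but that local-sup-by-samples estimate is exactly the sampling lower bound for the window $g_i$ that you are trying to prove, and it genuinely fails for the $g$ above. (For the forward direction this issue does not arise because the needed control is already packaged in Lemma \ref{lm-bdW-CgDg} for the frame window.) The paper's proof avoids the transfer entirely: since $g_i\in\calS\setminus\{0\}$, one can choose $\alpha=1/\wt N$ so small that $\sum_k|g_i(x-k/\wt N)|^2$ is bounded above and below, apply Walnut's theorem (Lemma \ref{lm-frame-L2}) to get a Gabor frame generated by $g_i$ itself at some lattice $\alpha\bbZ^d\times\beta\bbZ^d$ with $\beta=\alpha/L$, use Lemma \ref{lm-frame-invertible} to place the canonical dual in $M^1_{v_s\otimes v_s}\subset\scrF\mathfrak M^{q_i,p_i}_{\wt v_i}$, and then read off both bounds from Lemma \ref{lm-Gabor-Wpqm}, enlarging the sampling set to $\frac1N\zdd$ with $N=L\wt N$ for the lower bound. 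You should restructure your proof along these lines: make each $g_i$ the frame window at a sufficiently fine lattice depending on $g_1,g_2$, rather than transferring from a fixed reference window.
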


    \begin{proof}
    For nonzero Schwartz functions $g_1$ and $g_2$, there exists a constant $\a=1/\wt{N}$ with $\wt{N}\in\bbN$, such that
    \be
    0<A_0\leq \sum_{k\in \bbZ^d}|g_i(x-\frac{k}{\wt{N}})|^2\leq B_0<\fy,\ \ \  x\in \rd.
    \ee
    Using Lemma \ref{lm-frame-L2}, one can find a constant $\b=\al/L$ with $L\in \bbN$ such that
    $\mathcal {G}(g_i,\al,\b):= \{T_{\al k}M_{\b n}g_i\}_{k,n\in \bbZ^d}$ is a Gabor frame of $L^2(\bbR^d)$ for $i=1,2$, respectively.
    Denote by $\g_1$ and $\g_2$ be the canonical dual widow functions of $g_1$ and $g_2$, respectively.
    Using Lemma \ref{lm-frame-invertible}, we find that $\g_i\in M^1_{v_s\otimes v_s}$ for any sufficiently large $s>0$.
    It follows that $\g_i\in \scrF M^1_{v_s\otimes v_s}\subset \scrF\mathfrak{M}^{q_i,p_i}_{\wt{v_i}}$, where $\wt{v_i}(x,\xi):=v_i(\xi,-x)$, $i=1,2$.
      We also have $S_{g_i,\g_i}=D_{\g_i}C_{g_i}=I$ on $L^2(\bbR^d)$.

    Using Lemma \ref{lm-Gabor-Wpqm}, one can find two constants $A,B>0$ such that for every $f\in \calS(\rd)$ 
      	\be
      	A\|f\|_{W^{p_i,q_i}_{m_i}}
      	\leq
      	\left\|  V_{g_i}f\cdot m_i \right\|_{\l^{(p_i,q_i)}(\a\bbZ^d\times\b\bbZ^d)}
      	\leq
      	B\|f\|_{W^{p_i,q_i}_{m_i}},\ \ \ \  i=1,2.
      	\ee
      	Denote $N=L\wt{N}$.
        Note that $\a\zd\times \b\zd \subset \b\zd\times\b\zd=\zd/N\times \zd/N$. We obtain
        \be
        A\|f\|_{W^{p_i,q_i}_{m_i}}
        \leq
        \left\|  V_{g_i}f\cdot m_i \right\|_{\l^{(p_i,q_i)}(\a\bbZ^d\times\b\bbZ^d)}\leq \|V_{g_i}f(\frac{k}{N},\frac{n}{N})m_i(\frac{k}{N},\frac{n}{N})\|_{l^{(p_i,q_i)}(\zdd)}.
        \ee
        On the other hand, the sampling property (see Lemma \ref{lm-bdW-CgDg}) yields that
        \be
        \|V_{g_i}f(\frac{k}{N},\frac{n}{N})m_i(\frac{k}{N},\frac{n}{N})\|_{l^{(p_i,q_i)}(\zdd)}\leq B_1 \|f\|_{W^{p_i,q_i}_{m_i}}.
        \ee
      	The desired conclusion follows by the above two estimates.
    \end{proof}

%

\section{Equivalent characterization of $e^{i\Delta}$ on Wiener amalgam spaces}

In this section, we give the proof of Theorem \ref{thm-bd-eq}, showing that the boundedness $e^{i\Delta}\in \calL(W^{p_1,q_1}_m,W^{p_2,q_2})$
is equivalent to the corresponding boundedness on discrete mixed spaces of coordinate transformation $\calT$.
To achieve this goal, we first give a calculation of STFT associated with Sch\"{o}rdinger operator. See also \cite{KatoKobayashiIto2012TMJSS}.

\begin{lemma}[STFT of $e^{-i\pi|D|^2}$]\label{lm-STFT-Schrodinger}
	For $f\in \scrS'$, $\phi\in\scrS$, we have 
	\[
	V_{e^{-i\pi|D|^2}\phi}(e^{-i\pi|D|^2}f)(x,\om)
	=
	e^{-i\pi|\om|^2} \cdot V_{\phi}f(x-\om,\om).
	\]
\end{lemma}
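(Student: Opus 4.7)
The natural approach is to unfold the STFT as an $L^{2}$-inner product, move the propagator $e^{-i\pi|D|^{2}}$ to the other slot, and identify what the conjugated time-frequency shift $e^{i\pi|D|^{2}}\pi(x,\omega)e^{-i\pi|D|^{2}}$ does to the window $\phi$. Since $e^{-i\pi|D|^{2}}$ is unitary on $L^{2}(\bbR^{d})$ with adjoint $e^{i\pi|D|^{2}}$, we have
\[
V_{e^{-i\pi|D|^{2}}\phi}(e^{-i\pi|D|^{2}}f)(x,\omega)
=\big\langle f,\; e^{i\pi|D|^{2}}M_{\omega}T_{x}e^{-i\pi|D|^{2}}\phi\big\rangle,
\]
so the whole problem collapses to computing the single vector $e^{i\pi|D|^{2}}M_{\omega}T_{x}e^{-i\pi|D|^{2}}\phi$.

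The cleanest way to do this is to pass to the Fourier side. First I would recall the standard commutation $\widehat{M_{\omega}T_{x}\psi}(\xi)=e^{-2\pi i x\cdot(\xi-\omega)}\hat\psi(\xi-\omega)$, apply it with $\psi=e^{-i\pi|D|^{2}}\phi$ (so that $\hat\psi(\xi)=e^{-i\pi|\xi|^{2}}\hat\phi(\xi)$), and then multiply by the outer symbol $e^{i\pi|\xi|^{2}}$. This produces a phase $e^{i\pi(|\xi|^{2}-|\xi-\omega|^{2})}$, at which point I invoke the key algebraic identity
\[
|\xi|^{2}-|\xi-\omega|^{2}=2\xi\cdot\omega-|\omega|^{2},
\]
which turns the quadratic phase into a pure frequency shift $e^{2\pi i\xi\cdot\omega}$ (i.e. a translation by $-\omega$ on the spatial side) together with a scalar $e^{-i\pi|\omega|^{2}}$.

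Taking the inverse Fourier transform and simplifying the resulting phase factors $e^{-i\pi|\omega|^{2}}\cdot e^{2\pi i x\cdot\omega}\cdot e^{2\pi i\omega\cdot(\omega-x)}=e^{i\pi|\omega|^{2}}$ should give exactly
\[
e^{i\pi|D|^{2}}M_{\omega}T_{x}e^{-i\pi|D|^{2}}\phi \;=\; e^{i\pi|\omega|^{2}}\,M_{\omega}T_{x-\omega}\phi.
\]
Substituting back into the inner-product expression for the STFT and pulling the complex-conjugated scalar $\overline{e^{i\pi|\omega|^{2}}}=e^{-i\pi|\omega|^{2}}$ out yields $e^{-i\pi|\omega|^{2}}\langle f,M_{\omega}T_{x-\omega}\phi\rangle=e^{-i\pi|\omega|^{2}}V_{\phi}f(x-\omega,\omega)$, which is the claim.

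There is no real obstacle beyond bookkeeping of the $2\pi$-normalisation and the signs of the various phases; the only genuine input is the completion-of-squares identity, which is the metaplectic shearing $(x,\omega)\mapsto(x-\omega,\omega)$ of the phase plane induced by the quadratic-phase multiplier (up to the cocycle $e^{-i\pi|\omega|^{2}}$). Alternatively, one could avoid the Fourier-side manipulation entirely by noting that $e^{-i\pi|D|^{2}}$ is the metaplectic operator associated with a known symplectic shear and reading the identity off the standard covariance formula $V_{\mu(A)\phi}(\mu(A)f)(z)=V_{\phi}f(A^{-1}z)\cdot(\text{scalar})$; the hands-on computation above, however, is more self-contained and is what I would actually write down.
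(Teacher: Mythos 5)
Your proposal is correct, and it is essentially the paper's argument in a slightly different packaging: both pass to the Fourier side, where $e^{-i\pi|D|^2}$ is a multiplication operator, and both hinge on the identity $|\xi|^2-|\xi-\omega|^2=2\xi\cdot\omega-|\omega|^2$ to turn the quadratic phase into the shear $(x,\omega)\mapsto(x-\omega,\omega)$ plus the cocycle $e^{-i\pi|\omega|^2}$ (the paper routes this through the fundamental identity $V_gf(x,\xi)=e^{-2\pi i x\cdot\xi}V_{\hat g}\hat f(\xi,-x)$ applied forward and backward, whereas you compute the conjugated shift $e^{i\pi|D|^2}M_\omega T_x e^{-i\pi|D|^2}$ directly, and your intermediate identity $e^{i\pi|D|^{2}}M_{\omega}T_{x}e^{-i\pi|D|^{2}}\phi=e^{i\pi|\omega|^{2}}M_{\omega}T_{x-\omega}\phi$ checks out). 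No gaps; only the bookkeeping differs.
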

\begin{proof}
	Note that $e^{-i\pi|D|^2}\phi\in\scrS$.
	Using the foundation identity of time-frequency analysis, we find that
	\begin{align*}
		V_{e^{-i\pi|D|^2}\phi}(e^{-i\pi|D|^2}f)(x,\om)
		&=e^{-2\pi i x\cdot \om} V_{\widehat{e^{-i\pi|D|^2}\phi}}(\widehat{e^{-i\pi|D|^2}f})(\om, -x)
		\\
		 &=e^{-2\pi i x\cdot \om} \langle \widehat{e^{-i\pi|D|^2}f},  M_{-x}T_{\om}\widehat{e^{-i\pi|D|^2}\phi} \rangle
		\\	  
		&=e^{-2\pi i x\cdot \om} \langle e^{-i\pi|\xi|^2}\widehat{f}(\xi),  M_{-x}T_{\om}e^{-i\pi|\xi|^2}\widehat{\phi}(\xi) \rangle
		\\
		&=e^{-2\pi i x\cdot \om+i\pi|\om|^2} \langle \widehat{f},  M_{\om-x}T_{\om}\widehat{\phi} \rangle
		\\
		&=e^{-2\pi i x\cdot \om+i\pi|\om|^2} V_{\widehat{\phi}}\widehat{f}(\om,\om-x)
		 =e^{-i\pi|\om|^2} \cdot V_{\phi}f(x-\om,\om).
	\end{align*}
\end{proof}

For the convenient of the proof of Theorem \ref{thm-bd-eq}, we would like to use the boundedness 
of $e^{-i\pi|D|^2}$ rather than that of the standard Schr\"{o}dinger operator $e^{i\Delta}$. 
The following lemma give the relationship between the boundedness of these two operators.

\begin{lemma}\label{lm-bd-eq}(Boundedness of $e^{-i\pi|D|^2}$ and $e^{i\Delta}$)
	Let $p_i, q_i\in (0,\infty]$ for $i=1,2$. Assume that $m\in \scrP(\rdd)$. We have the following equivalent relation
   \be
     e^{i\Delta}\in \calL(W^{p_1,q_1}_{m}(\rd),W^{p_2,q_2}(\rd))
     \Longleftrightarrow 
     e^{-i\pi|D|^2}\in \calL(W^{p_1,q_1}_{\calD_{2,1/2}m}(\rd),W^{p_2,q_2}(\rd)).
     \ee
\end{lemma}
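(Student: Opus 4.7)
The plan is to exploit the fact that $e^{i\Delta}$ and $e^{-i\pi|D|^2}$ are conjugate via a dilation of factor $2$ on the physical side, and then transfer this conjugation to the level of weights on the Wiener amalgam spaces.

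First, I would verify the intertwining identity
\[
  e^{-i\pi|D|^2}\circ D_{2} \;=\; D_{2}\circ e^{i\Delta}\qquad\text{on }\calS(\rd),
\]
where $D_\la f(x):=f(\la x)$. This is a one-line Fourier-side computation: writing both operators as $\calF^{-1}\bigl(e^{i\sigma_j(\xi)}\hat{\cdot}\bigr)$ and using $\widehat{D_\la f}(\xi)=\la^{-d}\hat f(\xi/\la)$, the scaling $\xi\mapsto 2\xi$ turns one quadratic symbol into the other under the paper's chosen convention.

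Second, I would record the dilation rule for Wiener amalgam norms. From the elementary identity
\[
 V_\phi (D_\la f)(x,\xi)\;=\;\la^{-d}\,V_{D_{1/\la}\phi}f(\la x,\xi/\la),
\]
and a change of variables in the $L^{(p,q)}_m$ integral, one obtains
\[
 \|D_\la f\|_{W^{p,q}_m}\;\sim\; \la^{-d-d/p+d/q}\,\|f\|_{W^{p,q}_{\calD_{1/\la,\la}m}}.
\]
Window independence is handled by Lemma \ref{lm-eqnorm-wpqm}, since $D_{1/\la}\phi\in\calS(\rd)$ is admissible for any choice of exponents.

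Combining these two ingredients, for any $f\in\calS(\rd)$ we may write $e^{-i\pi|D|^2}f=D_{2}\,e^{i\Delta}\,D_{1/2}f$, and the dilation rule gives
\[
 \|e^{-i\pi|D|^2}f\|_{W^{p_2,q_2}} \;\sim\; \|e^{i\Delta}(D_{1/2}f)\|_{W^{p_2,q_2}},\qquad
 \|D_{1/2}f\|_{W^{p_1,q_1}_m}\;\sim\;\|f\|_{W^{p_1,q_1}_{\calD_{2,1/2}m}},
\]
up to a harmless constant depending only on $p_i,q_i,d$. The weight $\calD_{2,1/2}m$ appears naturally from the second identity, and the two relations together deliver both directions of the claimed equivalence, since all steps are reversible ($D_{1/2}=D_2^{-1}$ and $\calD_{1/2,2}\circ\calD_{2,1/2}=\mathrm{id}$).

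The main obstacle I foresee is not analytic but rather bookkeeping: tracking the exact numerical constants in the Fourier-side computation so that the dilation factor is genuinely $2$ and not some other constant, and verifying that $\calD_{2,1/2}m\in\scrP(\rdd)$ with an appropriate dilated submultiplicative control $\calD_{2,1/2}v$, so that the target space $W^{p_1,q_1}_{\calD_{2,1/2}m}$ falls within the framework of Section~2. Both issues are routine given the Schwartz class of the windows and the general weight machinery already assembled.
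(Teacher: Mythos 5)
Your proposal is correct and follows essentially the same route as the paper: the intertwining identity $e^{-i\pi|D|^2}\circ\calD_2=\calD_2\circ e^{i\Delta}$ (the paper writes it as $e^{i\Delta}=\calD_{1/2}e^{-i\pi|D|^2}\calD_2$) combined with the STFT dilation formula to transfer the dilation onto the weight, producing $\calD_{2,1/2}m$. The only cosmetic difference is that you state the dilation rule for $W^{p,q}_m$ as a general lemma with the explicit power $\la^{-d-d/p+d/q}$, whereas the paper computes the two cases $\la=2,1/2$ directly; your exponents and the weight bookkeeping check out.
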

\begin{proof}
	By using the scaling property of Fourier transform and a direct calculation, we have 
	\begin{align*}
	  e^{i\Delta}f
	  &= \scrF^{-1}(e^{-4\pi i|\om|^2}\hat{f}(\om))
	    =\scrF^{-1}(e^{-i\pi|2\om|^2}\hat{f}(\om))
	    =2^d\scrF^{-1}(e^{-i\pi|2\om|^2}\widehat{\calD_{2}f}(2\om))
	  \\
	  &= \calD_{1/2}\scrF^{-1}(e^{-i\pi|\om|^2}\widehat{\calD_{2}f}(\om))
	    =\calD_{1/2}e^{-i\pi|D|^2}\calD_{2}f.
	\end{align*}
    Note that 
	\begin{align}\label{pf.scaling-STFT}
	V_{\calD_{1/2}\phi}\calD_{1/2} f(x,\om)
	=2^{d}V_{\phi}f(x/2,2\om).
\end{align}
    For any $f\in \scrS$ we have
    \be
    \begin{split}
    	\|e^{i\Delta}f\|_{W^{p_2,q_2}}
    	&= 
    	\|\calD_{1/2}e^{-i\pi|D|^2}\calD_{2}f\|_{W^{p_2,q_2}}
    	=
    	\|V_{\calD_{1/2}\phi}(\calD_{1/2}e^{-i\pi|D|^2}\calD_{2}f)(x,\om)\|_{L^{(p_2,q_2)}}
    	\\
    	&= 
    	2^d\|V_{\phi}(e^{-i\pi|D|^2}\calD_{2}f)(x/2,2\om)\|_{L^{(p_2,q_2)}} 
    	\sim 
    	\|V_{\phi}(e^{-i\pi|D|^2}\calD_{2}f)(x,\om)\|_{L^{(p_2,q_2)}} 
    	\\
    	&=
    	\|e^{-i\pi|D|^2}\calD_{2}f\|_{W^{p_2,q_2}}.
    \end{split}
    \ee
    On the other hand, we find
    \be
    \begin{split}
    	\|f\|_{W^{p_1,q_1}_m}
    	&= 
    	\|\calD_{1/2}\calD_2f\|_{W^{p_1,q_1}_m}
    	= 
    	\|V_{\calD_{1/2}\phi}(\calD_{1/2}\calD_2f)(x,\om)m(x,\om)\|_{L^{(p_1,q_1)}}
    	\\
    	&=
    	2^d\|V_{\phi}(\calD_2f)(x/2,2\om)m(x,\om)\|_{L^{(p_1,q_1)}}
        \sim 
        \|V_{\phi}(\calD_2f)(x,\om)m(2x,\om/2)\|_{L^{(p_1,q_1)}}
        \\
        &=\|\calD_2f\|_{W^{p_1,q_1}_{\calD_{2,1/2}m}}.
    \end{split}
    \ee
    From the above two estimates, for any $f\in \scrS$, we have
    \be
    \begin{split}
    	e^{i\Delta}\in \calL(W^{p_1,q_1}_{m},W^{p_2,q_2})
    	&\Longleftrightarrow
    	\|e^{i\Delta}f\|_{W^{p_2,q_2}}\lesssim \|f\|_{W^{p_1,q_1}_m}\ \ \text{for any}\ f\in \scrS
    	\\
    	&\Longleftrightarrow
    	\|e^{-i\pi|D|^2}\calD_{2}f\|_{W^{p_2,q_2}}\lesssim \|\calD_2f\|_{W^{p_1,q_1}_{\calD_{2,1/2}m}}\ \ \text{for any}\ f\in \scrS
    	\\
    	&\Longleftrightarrow e^{-i\pi|D|^2}\in \calL(W^{p_1,q_1}_{\calD_{2,1/2}m},W^{p_2,q_2}).
    \end{split}
    \ee
\end{proof}
Now, we turn to the characterization of the boundedness of $e^{-i\pi|D|^2}$.

\begin{proposition}[Equivalent characterization for $e^{-i\pi|D|^2}$]\label{pp-bd-eq}
  Let $p_i, q_i\in (0,\infty]$ for $i=1,2$. Assume that $m\in \scrP(\rdd)$. The following equivalent relation is valid:
  \be
  e^{-i\pi|D|^2}\in \calL(W^{p_1,q_1}_m(\rd),W^{p_2,q_2}(\rd))  \Longleftrightarrow \calT\in \calL(l^{(p_1,q_1)}_m(\zdd), l^{(p_2,q_2)}(\zdd)).
  \ee
\end{proposition}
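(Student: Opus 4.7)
The plan is to convert the operator inequality for $e^{-i\pi|D|^2}$ into a sequence inequality for $\calT$ by combining the magic formula (Lemma \ref{lm-STFT-Schrodinger}) with the Gabor-lattice sampling characterization of the Wiener amalgam norm (Lemma \ref{lm-eqnorm-wpqm}).

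First I would fix a nonzero $\phi\in\calS(\rd)$ and set $\tilde\phi:=e^{-i\pi|D|^2}\phi$. Since $e^{-i\pi|\xi|^2}$ is $C^{\infty}$ with polynomially bounded derivatives, $\widehat{\tilde\phi}=e^{-i\pi|\xi|^2}\hat\phi\in\calS(\rd)$, so $\tilde\phi$ is itself a Schwartz window. Applying Lemma \ref{lm-eqnorm-wpqm} jointly to the two windows $\phi,\tilde\phi$ and the weights $m,1$, one obtains $N\in\bbN$ such that, for every $f\in\calS(\rd)$,
\begin{equation*}
\|f\|_{W^{p_1,q_1}_m}\sim\|V_\phi f(\tfrac{k}{N},\tfrac{n}{N})\,m(\tfrac{k}{N},\tfrac{n}{N})\|_{l^{(p_1,q_1)}(\zdd)},
\end{equation*}
\begin{equation*}
\|e^{-i\pi|D|^2}f\|_{W^{p_2,q_2}}\sim\|V_{\tilde\phi}(e^{-i\pi|D|^2}f)(\tfrac{k}{N},\tfrac{n}{N})\|_{l^{(p_2,q_2)}(\zdd)}.
\end{equation*}
Evaluating Lemma \ref{lm-STFT-Schrodinger} at $(x,\om)=(k/N,n/N)$ and taking modulus gives the key identity
\begin{equation*}
|V_{\tilde\phi}(e^{-i\pi|D|^2}f)(\tfrac{k}{N},\tfrac{n}{N})|=|V_\phi f(\tfrac{k-n}{N},\tfrac{n}{N})|.
\end{equation*}
With $a_{k,n}:=V_\phi f(k/N,n/N)$ this is exactly $|(\calT a)_{k,n}|$. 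Identifying the sampling lattice $\tfrac{1}{N}\zdd$ with $\zdd$ in the natural way (so that the sampled weight $m(\cdot/N,\cdot/N)$ plays the role of the weight $m$ in the proposition's statement), we arrive at
\begin{equation*}
\|e^{-i\pi|D|^2}f\|_{W^{p_2,q_2}}\sim\|\calT a\|_{l^{(p_2,q_2)}(\zdd)},\qquad \|f\|_{W^{p_1,q_1}_m}\sim\|a\|_{l^{(p_1,q_1)}_m(\zdd)}.
\end{equation*}

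The ``$\Leftarrow$'' direction is now immediate: chain the two equivalences with the assumed $\calT$-bound to obtain $\|e^{-i\pi|D|^2}f\|_{W^{p_2,q_2}}\lesssim\|f\|_{W^{p_1,q_1}_m}$ for every $f\in\calS$. For ``$\Rightarrow$'' the induced inequality only controls $\calT$ on sequences of the form $a=(V_\phi f(k/N,n/N))_{k,n}$ with $f\in\calS$, and the principal technical hurdle is to lift it to arbitrary sequences $c\in l^{(p_1,q_1)}_m(\zdd)$. My plan is to invoke Gabor synthesis (Lemma \ref{lm-Gabor-Wpqm}): choose a dual window $\gamma$ of $\phi$ on the lattice $\tfrac{1}{N}\bbZ^d\times\tfrac{1}{N}\bbZ^d$—furnished by Lemmas \ref{lm-frame-L2} and \ref{lm-frame-invertible} after a routine rescaling—and for a finitely supported test sequence $c$ set $f_c:=D_\gamma c\in\calS$. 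Using $D_\gamma C_\phi=I$ on the Wiener amalgam side together with the explicit factorization of $C_{\tilde\phi}\,e^{-i\pi|D|^2}\,D_\gamma$ through $\calT$ (whose kernel is the rapidly decaying Gabor matrix $V_{\tilde\phi}\tilde\gamma$, where $\tilde\gamma:=e^{-i\pi|D|^2}\gamma$), the desired sequence inequality for $\calT c$ follows; density of finitely supported sequences covers $p_1,q_1<\infty$, and a weak-$\ast$ argument via $W^{\infty}_{1/v}$ handles the endpoint cases. This surjectivity-type lifting from the Gabor image to the whole sequence space, with the accompanying Gabor-Gramian bookkeeping, is the step where the real work lies; the remaining steps (the choice of Schwartz window, the sampling equivalence, and the magic-formula identification with $\calT$) are essentially mechanical once the formula of Lemma \ref{lm-STFT-Schrodinger} is in hand.
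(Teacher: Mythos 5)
Your ``$\Longleftarrow$'' direction is essentially the paper's argument, but the step ``identifying the sampling lattice $\frac{1}{N}\zdd$ with $\zdd$ so that the sampled weight $m(\cdot/N,\cdot/N)$ plays the role of $m$'' hides a real issue: the hypothesis is boundedness of $\calT$ on $l^{(p_1,q_1)}_m(\zdd)$ with the weight $m$ sampled at \emph{integer} points, whereas after your identification you need $\calT$ bounded with respect to the weight $m(k/N,n/N)$, and for a general $v$-moderate $m$ these two weights on $\zdd$ need not be equivalent (moderateness controls $m$ under bounded shifts, not under dilations). The paper bridges this by decomposing $\bbZ^d\times\bbZ^d$ into the $N^{2d}$ cosets $\Gamma_{j,l}=\{(Nk+j,Nn+l)\}$, applying the hypothesis on each coset (where the fine-lattice sample points are bounded translates of integer points, so $m(k+\frac{j-l}{N},n+\frac{l}{N})\sim m(k,n)$ by moderateness), and summing the finitely many pieces. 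Some such device is needed to make your lattice identification legitimate.

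The ``$\Longrightarrow$'' direction is where the genuine gap lies. You propose to test the operator bound on $f_c=D_\gamma c$ and to recover $\calT c$ from $C_{\tilde\phi}\,e^{-i\pi|D|^2}\,D_\gamma$. Two problems. First, the canonical dual window $\gamma$ is only guaranteed to lie in $M^1_v$, so $f_c\notin\calS(\rd)$ in general, while the hypothesis is an estimate for Schwartz $f$ only. Second, and more seriously, the matrix of $C_{\tilde\phi}\,e^{-i\pi|D|^2}\,D_\gamma$ is not $\calT$ but $\calT$ composed with the Gramian $\langle\pi(j,l)\gamma,\pi(k,n)\phi\rangle$; since $C_\phi D_\gamma$ is only a projection onto the range of $C_\phi$ at the sequence-space level (an arbitrary finitely supported $c$ is not of the form $C_\phi f$), extracting a lower bound for $\|\calT c\|_{l^{(p_2,q_2)}}$ requires inverting this Gramian on $l^{(p_2,q_2)}$ --- precisely the ``surjectivity-type lifting'' you defer and for which you give no argument. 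The paper avoids the issue entirely with a different device: take a \emph{Schwartz} synthesis window $h\geq 0$ with $\operatorname{supp}\hat h\subset B(0,1/8)$ and $\|h\|_{L^2}=1$, and test on $f=D_h\vec a$ with $\vec a$ finitely supported and non-negative. The frequency supports of $M_nT_kh$ for distinct $n$ are disjoint, so $V_hf(k,n)=\sum_j a_{j,n}\langle T_jh,T_kh\rangle\geq a_{k,n}$ pointwise; combined with the magic formula and the sampling inequality of Lemma \ref{lm-bdW-CgDg}, this yields $\|\calT\vec a\|_{l^{(p_2,q_2)}}\lesssim\|e^{-i\pi|D|^2}f\|_{W^{p_2,q_2}}\lesssim\|f\|_{W^{p_1,q_1}_m}\lesssim\|\vec a\|_{l^{(p_1,q_1)}_m}$, and boundedness of $\calT$ on non-negative sequences suffices because $|(\calT\vec a)_{k,n}|=(\calT|\vec a|)_{k,n}$. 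Without this (or an actual proof of your lifting step), the forward implication is not established.
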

\begin{proof}
	We first verify the ``$\Longrightarrow$'' direction. For this purpose, we choose a non-negative function $h\in \calS(\rd)\bs \{0\}$ satisfying $\text{supp}\hat{h}\subset B(0,1/8)$ and $\|h\|_{L^2}=1$.
	For any truncated (only finite nonzero terms) non-negative sequence $\vec{a}=\{a_{k,n}\}_{(k,n)\in \zdd}$, we define
	\be
	f=D_{h}\vec{a}=\sum_{(k,n)\in \zdd}a_{k,n}\pi(k,n)h.
	\ee
	By a direct calculation, we find that
	\be
	\begin{split}
		V_{h}f(k,n)
		= &
		\sum_{j,l}a_{j,l}\lan \pi(j,l)h, \pi(k,n)h\ran 
		= 
		\sum_{j}a_{j,n}\lan \pi(j,n)h, \pi(k,n)h\ran 
		\\
     	= &
		\sum_{j}a_{j,n}\lan T_jh, T_kh\ran \geq a_{k,n}\lan T_kh, T_kh\ran=a_{k,n}\|h\|_{L^2}=a_{k,n}.
	\end{split}
	\ee
	Combing this with Lemma \ref{lm-bdW-CgDg}  the sampling property and Lemma \ref{lm-STFT-Schrodinger}, we obtain
    \be
    \begin{split}
    	\|e^{-i\pi|D|^2}f\|_{W^{p_2,q_2}}
    	    	\gtrsim &
    	\|V_{e^{-i\pi|D|^2 h}}(e^{-i\pi|D|^2}f)(k,n)\|_{l^{(p_2,q_2)}}
    	\\
    	= &
    	\|e^{-i\pi|n|^2} \cdot V_hf(k-n,n)\|_{l^{(p_2,q_2)}}
    	\\
    	\geq& 
    	\|a_{k-n,n}\|_{l^{(p_2,q_2)}}=\|\calT \vec{a}\||_{l^{(p_2,q_2)}(\zdd)}.
    	\end{split}	
    \ee
On the other hand, by the boundedness of reconstruction operator (see Lemma \ref{lm-bdW-CgDg}) we obtain
       \be
       \|f\|_{W^{p_1,q_1}_m}=\|D_{h}\vec{a}\|_{W^{p_1,q_1}_m}\lesssim \|\vec{a}\|_{l^{(p_1,q_1)}_m(\zdd)}.
       \ee	
	The ``$\Longrightarrow$'' direction follows by the above two estimates.
	
	Next, we turn to the ``$\Longleftarrow$'' direction. Using Lemma \ref{lm-eqnorm-wpqm},  we can find a constant $N\in \bbN$ such that for any $f\in \calS(\rd)$
	\be
	\|e^{-i\pi|D|^2}f\|_{W^{p_2,q_2}}\sim \|V_{e^{-i\pi|D|^2}h}(e^{-i\pi|D|^2}f)(\frac{k}{N},\frac{n}{N})\|_{l^{(p_2,q_2)}(\zdd)}
	\ee
	and
	\be
	\|f\|_{W^{p_1,q_1}_m}\sim \|V_hf(\frac{k}{N},\frac{n}{N})m(\frac{k}{N},\frac{n}{N})\|_{l^{(p_1,q_1)}(\zdd)}.
	\ee
	From this, in order to obtain the desired conclusion, we only need to verify the following inequality
	\be
	\|V_{e^{-i\pi|D|^2}h}(e^{-i\pi|D|^2}f)(\frac{k}{N},\frac{n}{N})\|_{l^{(p_2,q_2)}(\zdd)} \lesssim \|V_hf(\frac{k}{N},\frac{n}{N})m(\frac{k}{N},\frac{n}{N})\|_{l^{(p_1,q_1)}(\zdd)}.
	\ee
	Using Lemma \ref{lm-STFT-Schrodinger}, this is equivalent to
	\be
	\|V_{h}f(\frac{k-n}{N},\frac{n}{N})\|_{l^{(p_2,q_2)}(\zdd)} \lesssim \|V_hf(\frac{k}{N},\frac{n}{N})m(\frac{k}{N},\frac{n}{N})\|_{l^{(p_1,q_1)}(\zdd)}.
	\ee
	For this constant $N\in \bbN$, we construct a decomposition of $\zd\times \zd$ by
	\be
	\bbZ^d\times \bbZ^d=\bigcup\limits_{(j,l)\in\La} \Gamma_{j,l},
	\ee
	where
	$
	\Gamma_{j,l}:=
	\{(Nk+j,Nn+l), n,k\in\bbZ^d\}$ and $\Lambda:=\{(j,l)\in \bbN^d\times \bbN^d: 0\leq \|j\|_{l^{\i}},\|l\|_{l^{\i}}<N\}.
	$ 
	Write
	\be
	\begin{split}
		\|V_{h}f(\frac{k-n}{N},\frac{n}{N})\|_{l^{(p_2,q_2)}(\zdd)} 
		\leq &
		\sum_{(j,l)\in \La}\|V_{h}f(\frac{k-n}{N},\frac{n}{N})\|_{l^{(p_2,q_2)}(\G_{j,l})}
		\\
		= & 
		\sum_{(j,l)\in \La}\|V_{h}f(k-n+\frac{j-l}{N},n+\frac{l}{N})\|_{l^{(p_2,q_2)}(\zdd)}.
	\end{split}
	\ee
	Using the assumption $\calT\in \calL(l^{(p_1,q_1)}_m(\zdd), l^{(p_2,q_2)}(\zdd))$, the last term above can be dominated by
	\be
	\begin{split}
		&\sum_{(j,l)\in \La}\|V_hf(k+\frac{j-l}{N},n+\frac{l}{N})m(k,n)\|_{l^{(p_1,q_1)}(\zdd)}
		\\
		\lesssim &
		\sum_{(j,l)\in \La}\|V_hf(k+\frac{j-l}{N},n+\frac{l}{N})m(k+\frac{j-l}{N},n+\frac{l}{N})\|_{l^{(p_1,q_1)}(\zdd)}
				\\
		\lesssim &
		\sum_{(j,l)\in \La}\|V_hf(\frac{Nk+j}{N},\frac{Nn+l}{N})m(\frac{Nk+j}{N},\frac{Nn+l}{N})\|_{l^{(p_1,q_1)}(\zdd)}
		\\
		\lesssim &
		\|V_hf(\frac{k}{N},\frac{n}{N})m(\frac{k}{N},\frac{n}{N})\|_{l^{(p_1,q_1)}(\zdd)},
	\end{split}
	\ee
	where we use the fact that
	\be
	m(k,n)\sim m(k+\frac{j-l}{N},n+\frac{l}{N})\ \ \text{for all}\ \ (j,l)\in \La.
	\ee
	We have now completed this proof.
\end{proof}
\textbf{Proof of Theorem \ref{thm-bd-eq}.}
The desired conclusion follows directly by Lemma \ref{lm-bd-eq} and Proposition \ref{pp-bd-eq}.
\section{Sharp exponents characterization of $e^{i\Delta}$ on Wiener amalgam spaces}
Thanks to the equivalent characterization in Theorem \ref{thm-bd-eq}, we first consider the sharp exponents characterization for 
$\calT\in \calL(l^{(p_1,q_1)}_{1\otimes v_s}(\zdd), l^{(p_2,q_2)}(\zdd))$.
The following lemmas play an important role in our proof.
\begin{lemma}[see Lemma 4.4 in \cite{GuoChenFanZhao2019MMJ}]\label{lm-eb-lp}
	$l^{q,s}\subset l^p$ if and only if $s\geq d(1/p-1/q)\vee0$ with strict inequality if $1/p>1/q$. 
\end{lemma}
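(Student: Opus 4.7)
The plan is to establish both directions of the equivalence by reducing to classical $l^p$ inclusions and Hölder's inequality, with the endpoint $s=d(1/p-1/q)$ when $1/p>1/q$ requiring a logarithmic refinement.

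For the sufficiency ($\Longleftarrow$), I would split on the sign of $1/p-1/q$. When $1/p\leq 1/q$, the standard embedding $l^{q}(\zd)\hookrightarrow l^p(\zd)$ combined with $s\geq 0$ (so $(1+|n|)^s\geq 1$) yields
\[
\|a\|_{l^p}\lesssim \|a\|_{l^q}\leq \|a\,(1+|\cdot|)^s\|_{l^q}=\|a\|_{l^{q,s}}.
\]
When $1/p>1/q$, I would apply Hölder's inequality with conjugate exponent $r$ defined by $1/r=1/p-1/q$, writing $a_n=\bigl(a_n(1+|n|)^s\bigr)(1+|n|)^{-s}$ to obtain
\[
\|a\|_{l^p}\leq \bigl\|a\,(1+|\cdot|)^s\bigr\|_{l^q}\bigl\|(1+|\cdot|)^{-s}\bigr\|_{l^r}.
\]
Since $\sum_{n\in\zd}(1+|n|)^{-sr}$ converges iff $sr>d$, this closes the argument exactly under the strict condition $s>d(1/p-1/q)$.

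For the necessity ($\Longrightarrow$), I would test on concrete sequences. To force $s\geq 0$, I would take single-point atoms $a=\delta_{n_0}$ with $|n_0|\to\infty$: since $\|a\|_{l^p}=1$ while $\|a\|_{l^{q,s}}=(1+|n_0|)^s$, a uniform embedding constant forces $s\geq 0$. To force $s\geq d(1/p-1/q)$ when $1/p>1/q$, I would test on truncated power sequences $a_n=(1+|n|)^{-\alpha}\mathbf{1}_{|n|\leq N}$, matching the critical decay thresholds of $\|a\,(1+|\cdot|)^s\|_{l^q}$ and $\|a\|_{l^p}$ as $N\to\infty$ and $\alpha$ varies.

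The hard part will be ruling out the endpoint $s=d(1/p-1/q)$ when $1/p>1/q$, since pure power sequences sit exactly on the boundary of both integrability conditions. My plan is to introduce a logarithmic correction and test on
\[
a_n:=(1+|n|)^{-(d/q+s)}\bigl(\log(2+|n|)\bigr)^{-1/q-\epsilon},\qquad n\in\zd,
\]
for small $\epsilon>0$. A dyadic decomposition $\{|n|\sim 2^k\}$ gives $\bigl\|a\,(1+|\cdot|)^s\bigr\|_{l^q}^q\sim \sum_{k\geq 1}k^{-1-\epsilon q}<\infty$. At the critical $s$, the identity $p(d/q+s)=d$ makes the radial factor in $\|a\|_{l^p}^p$ exactly cancel the dyadic volume, so that $\|a\|_{l^p}^p\sim \sum_{k\geq 1}k^{-p/q-p\epsilon}$. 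Since $p/q<1$, choosing $\epsilon$ small enough forces divergence, exhibiting $a\in l^{q,s}\setminus l^p$ and ruling out the endpoint. This log-correction trick is the crux of the argument and where the main care is needed.
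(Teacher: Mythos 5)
Your proof is correct and complete. Note that the paper itself gives no proof of this lemma --- it is quoted verbatim from Lemma 4.4 of the cited reference \cite{GuoChenFanZhao2019MMJ} --- so there is no internal argument to compare against; your route (the embedding $l^q\hookrightarrow l^p$ plus positivity of the weight when $1/p\leq 1/q$, H\"older with $1/r=1/p-1/q$ and the convergence criterion $sr>d$ when $1/p>1/q$, delta and indicator test sequences for necessity, and the logarithmically corrected sequence $a_n=(1+|n|)^{-(d/q+s)}(\log(2+|n|))^{-1/q-\epsilon}$ to exclude the endpoint $s=d(1/p-1/q)$) is the standard one and all steps check out, including the quasi-norm range $0<p,q\leq\infty$ and the case $q=\infty$, where $p/q=0$ makes the divergence of $\sum_k k^{-p/q-p\epsilon}$ immediate for small $\epsilon$.
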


\begin{lemma}\label{lm-bd-pq-qp}
	Let $0<q\leq p\leq \fy$, $s\in \rr$. We have $\calT\in \calL(l^{(q,p)}_{1\otimes v_s}(\zdd), l^{(p,q)}_{1\otimes v_s}(\zdd))$.
\end{lemma}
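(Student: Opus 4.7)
The plan is to recognize the required bound as a Minkowski mixed-norm inequality after two reductions. First, the weight $1\otimes v_s$ depends only on the second coordinate $n$, and $\calT$ preserves that coordinate, so the weight passes through $\calT$ unchanged: setting $\vec{b}:=\{v_s(n)\,a_{k,n}\}_{k,n}$ one has $|(\calT\vec{a})_{k,n}|\,v_s(n) = |(\calT\vec{b})_{k,n}|$, which yields
\[
\|\calT\vec{a}\|_{l^{(p,q)}_{1\otimes v_s}} = \|\calT\vec{b}\|_{l^{(p,q)}},
\qquad
\|\vec{a}\|_{l^{(q,p)}_{1\otimes v_s}} = \|\vec{b}\|_{l^{(q,p)}}.
\]
It therefore suffices to prove the unweighted bound $\|\calT\vec{b}\|_{l^{(p,q)}} \le \|\vec{b}\|_{l^{(q,p)}}$.

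Second, I would reparametrize the antidiagonals. Define the shifted rows $d_j(k):= b_{j,k-j}$ for each $j\in\zd$; then $d_j(\cdot)$ is merely a translation of $b_{j,\cdot}$, so $\sum_k |d_j(k)|^p = \sum_n |b_{j,n}|^p$. Substituting $j=k-n$ in the inner sum of $\|\calT\vec{b}\|_{l^{(p,q)}}$ gives $\sum_n |b_{k-n,n}|^q = \sum_j |d_j(k)|^q$. Consequently
\[
\|\calT\vec{b}\|_{l^{(p,q)}} = \Bigl(\sum_k \bigl(\sum_j |d_j(k)|^q\bigr)^{p/q}\Bigr)^{1/p},
\qquad
\|\vec{b}\|_{l^{(q,p)}} = \Bigl(\sum_j \bigl(\sum_k |d_j(k)|^p\bigr)^{q/p}\Bigr)^{1/q}.
\]

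The inequality between these two quantities is exactly the generalized Minkowski mixed-norm bound for $0<q\le p\le\infty$. When $p=\infty$ it is immediate from pulling a $\sup_k$ inside an $\ell^q$ sum over $j$. When $p<\infty$, I would raise both sides to the $q$-th power, set $c_{k,j}:=|d_j(k)|^q$ and $r:= p/q\ge 1$, thereby reducing the target side to $\|\sum_j c_{\cdot,j}\|_{\ell^r_k}$ and the source side to $\sum_j \|c_{\cdot,j}\|_{\ell^r_k}$. The estimate collapses to the ordinary triangle inequality
\[
\Bigl(\sum_k \bigl(\sum_j c_{k,j}\bigr)^r\Bigr)^{1/r} \le \sum_j \Bigl(\sum_k c_{k,j}^r\Bigr)^{1/r}
\]
in $\ell^r_k$ with $r\ge 1$, which holds with constant $1$. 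The only mild subtlety is the quasi-norm range $q<1$, but the device of raising to the $q$-th power neatly bypasses it because Minkowski is always invoked at the Banach-range exponent $r=p/q\ge 1$; hence no multiplicative constant is actually needed.
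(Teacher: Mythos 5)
Your proof is correct and follows essentially the same route as the paper's: an index substitution along the antidiagonals followed by the generalized Minkowski inequality for $q\le p$ (the paper carries the weight through the substitutions rather than absorbing it into $\vec b$ at the outset, but this is cosmetic). Your explicit reduction of the Minkowski step to the triangle inequality in $\ell^{p/q}$ is a welcome bit of extra care, since it makes transparent why the interchange remains valid in the quasi-norm range $q<1$, a point the paper passes over silently.
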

\begin{proof}
	Write
	\be
	\begin{split}
		\|\calT\vec{a}\|_{l^{(p,q)}_{1\otimes v_s}}
		= &
		\big(\sum_{k}\big(\sum_{n}|a_{k-n,n}|^{q}\lan n\ran^{sq}\big)^{\frac{p}{q}}\big)^{\frac{1}{p}}
		\\
		= &
		\big(\sum_{k}\big(\sum_{n}|a_{n,k-n}|^{q}\lan k-n\ran^{sq}\big)^{\frac{p}{q}}\big)^{\frac{1}{p}}.
	\end{split}
	\ee
	Applying the Minkowski inequality, the last term can be dominated from above by
	\be
	\begin{split}
		\big(\sum_{n}\big(\sum_{k}|a_{n,k-n}|^{p}\lan k-n\ran^{sp}\big)^{\frac{q}{p}}\big)^{\frac{1}{q}}
		=
		\big(\sum_{n}\big(\sum_{k}|a_{n,k}|^{p}\lan k\ran^{sp}\big)^{\frac{q}{p}}\big)^{\frac{1}{q}}
		=
		\|\vec{a}\|_{l^{(q,p)}_{1\otimes v_s}}.
	\end{split}
	\ee
\end{proof}

For the convenience of the readers, we introduce some notations used in the proof of Proposition \ref{pp-se}.
Denote 
\be
Y=\{(p_1,p_2,q_1,q_2,s)\in (0,\fy]^4\times \rr: \calT\in \calL(l^{(p_1,q_1)}_{1\otimes v_s}(\zdd), l^{(p_2,q_2)}(\zdd))\},
\ee
and let $X$ be the set as follows:
\be
X=\{(p_1,p_2,q_1,q_2,s)\in (0,\fy]^4\times \rr: 1/p_2\leq 1/p_1,\ s\geq A\vee B\vee (A+B)\vee 0\}.
\ee
Define the sets $X_j$, $j=0,1,2,3$, as
\begin{align*}
X_0 &=\{(p_1,p_2,q_1,q_2,s)\in X: A, B\leq 0\}, \\
X_1 &=\{(p_1,p_2,q_1,q_2,s)\in X: A>0, B\leq 0\},\\
X_2 &=\{(p_1,p_2,q_1,q_2,s)\in X: A\leq 0, B>0\},\\
X_3 &=\{(p_1,p_2,q_1,q_2,s)\in X: A, B>0\}.
\end{align*}
Note that $X$ is now the union of the mutually disjoint sets $X_j$, $j=0,1,2,3$.
Denote $Y_j=Y\cap X_j$, $j=0,1,2,3$.
In the proof of Proposition \ref{pp-se}, our strategy is to prove $Y\subset X$ firstly, and then to characterize the set $Y_j$, $j=0,1,2,3$.
Therefore the desired set $Y$ can be characterized by
\be
Y=Y\cap X=Y\cap (\bigcup_{j=0}^3X_j)=\bigcup_{j=0}^3Y_j.
\ee

\begin{proposition}[Sharp exponents for $\calT$]\label{pp-se}
		Let $s\in \bbR$, $p_i, q_i\in (0,\infty]$ for $i=1,2$.  The boundedness 
	\ben\label{pp-se-cd1}
	\calT\in \calL(l^{(p_1,q_1)}_{1\otimes v_s}(\zdd), l^{(p_2,q_2)}(\zdd))
	\een
	holds if and only if $1/p_2\leq 1/p_1$ and
	$s$ satisfies the conditions in Theorem \ref{thm-bd-se}.
\end{proposition}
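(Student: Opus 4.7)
The plan is to prove both directions by exploiting the discrete mixed-norm machinery of Section 4 (Lemmas \ref{lm-eb-lp} and \ref{lm-bd-pq-qp}) together with carefully chosen test sequences.

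For necessity, I will feed four families of concrete sequences into the assumed bound \eqref{pp-se-cd1}. First, a sequence supported on $\{(k,0):k\in\zd\}$ yields $\|\vec{c}\|_{l^{p_2}}\lesssim\|\vec{c}\|_{l^{p_1}}$, forcing $1/p_2\leq 1/p_1$. Second, a sequence supported on $\{(0,n):n\in\zd\}$ collapses the image of $\calT$ onto the diagonal $k=n$ and, via Lemma \ref{lm-eb-lp}, produces $s\geq A\vee 0$ with strict inequality when $A>0$. Third, a sequence supported on $\{(-n,n):n\in\zd\}$ places the image entirely at $k=0$, giving $s\geq B\vee 0$ with strict inequality when $B>0$. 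Fourth, the rank-one box $a_{k,n}=\mathbf{1}_{[0,N]^d}(k)\mathbf{1}_{[0,N]^d}(n)$: the source norm scales as $N^{d/p_1+d/q_1+s}$ while, after computing the triangular-support image, its norm scales as $N^{d/p_2+d/q_2}$; letting $N\to\infty$ yields $s\geq A+B$.

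For sufficiency, I will partition the admissible region into $X_0,X_1,X_2,X_3$ and in each regime compose three elementary ingredients: the base trivial estimate in Lemma \ref{lm-bd-pq-qp}, the scalar embeddings of Lemma \ref{lm-eb-lp}, and Minkowski's inequality for interchanging the order of the mixed norms (together with the useful observation that $\calT$ acts as an isometry on $l^{p,q}$ with outer index $n$, since $l^p$ in the $k$-direction is shift-invariant slice by slice). The ordering of $p_1,p_2,q_1,q_2$ then dictates the right composition. In $X_0$ direct embeddings suffice because $A,B\leq 0$ point in the embedding-friendly direction and $s\geq 0$ costs nothing. In $X_1$ and $X_2$ one uses the one-sided weighted embedding from Lemma \ref{lm-eb-lp} combined with Minkowski. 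In $X_3$ one exploits the full weighted embedding $l^{q_1,s}\subset l^{q_2}$ in the $n$-variable together with $1/p_2\leq 1/p_1$ in the $k$-variable.

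The hard part will be case (iii), namely the strict inequality $s>A+B$ when $p_1=p_2$ and $A,B>0$. The box test only produces the non-strict bound, so to exclude the endpoint $s=A+B$ one must detect a genuine subcritical divergence. My plan here is to reduce this regime, as suggested in the introduction, to a discrete fractional-integral estimate: when $p_1=p_2=p$, the critical inequality for $\calT$ on $l^{(p,q_1)}_{1\otimes v_s}\to l^{(p,q_2)}$ at $s=A+B$ is equivalent to a convolution bound of Hardy--Littlewood--Sobolev type on $\zd$, whose well-known endpoint failure supplies the required strict inequality. The remaining strict cases (i) and (ii) then follow directly from the strict parts of Lemma \ref{lm-eb-lp} applied to the second and third tests above.
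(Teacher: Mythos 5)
Your necessity argument is essentially the paper's Step 1: the four test families (row, column, anti-diagonal, box) deliver exactly $1/p_2\le 1/p_1$, $s\ge A\vee 0$ (strict if $A>0$), $s\ge B\vee 0$ (strict if $B>0$) and $s\ge A+B$, which matches the paper. But for the remaining strictness when $A,B>0$ and $p_1=p_2$, your plan to invoke the ``endpoint failure'' of a discrete HLS bound is an assertion, not a proof, and the claimed equivalence is not one: the natural reduction $|a_{k-n,n}|\le b_{k-n}\lan n\ran^{-s}$ with $b_k=\sup_n|a_{k,n}|\lan n\ran^{s}$ only \emph{dominates} the left-hand side, so failure of the convolution inequality does not by itself refute the original inequality for $\calT$. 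What closes this case is a fifth test that you are missing: $a_{k,n}=b_n$ for $|k|\le 2N$, $|n|\le N$ (constant in $k$ over a growing box). Both sides then factor as $N^{d/p}$ times $\|b\|_{l^{q_2}}$, resp.\ $\|b\,v_s\|_{l^{q_1}}$, forcing the embedding $l^{q_1}_{v_s}\subset l^{q_2}$, hence $s>d(1/q_2-1/q_1)=A+B$ by Lemma \ref{lm-eb-lp} --- a contradiction at $s=A+B$.

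The more serious gap is in the sufficiency for $A,B>0$. Using ``the full weighted embedding $l^{q_1}_{v_s}\subset l^{q_2}$ in the $n$-variable'' requires $s>d(1/q_2-1/q_1)$; but when $p_1<p_2$ one has $A+B=d(1/q_2-1/q_1)-d(1/p_1-1/p_2)<d(1/q_2-1/q_1)$, so your route misses the entire range $A+B\le s\le d(1/q_2-1/q_1)$ that the proposition asserts. For $s>A+B$ the correct elementary argument splits $s=s_1+s_2$ with $s_1>A$, $s_2>B$ and interleaves the two embeddings $l^{q_1}_{v_{s_1}}\subset l^{p_2}$ and $l^{p_1}_{v_{s_2}}\subset l^{q_2}$ with Lemma \ref{lm-bd-pq-qp}; your single-embedding description does not do this. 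The genuinely hard case $s=A+B$ with $p_1<p_2$ is not addressed by your sketch at all, and cannot be: no composition of Minkowski with one-variable embeddings reaches it, because the deficit in the weight must be traded against the gain $1/p_1-1/p_2>0$, which is a fractional-integration phenomenon. The paper proves this endpoint by reducing the case $q_1=\infty$ to the \emph{boundedness} $\calI_{\la}\in\calL(l^{r_1},l^{r_2})$ of the discrete fractional integral with $r_i=p_i/q_2$ and $\la=d(1/r_1-1/r_2)\in(0,d)$ --- i.e., HLS enters affirmatively here, not through its failure --- and then recovers general $q_1$ by complex interpolation against the trivial bound $\calT\in\calL(l^{(r,r)},l^{(r,r)})$. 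Without this ingredient your proposal does not establish the ``if'' direction of the proposition.
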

\begin{proof}
	Recall the definition of $Y$ mentioned above.
	This proof is actually to characterize the set $Y$.
	It can be achieved by the following steps.
	
	\textbf{Step 1.} The goal of this part is to verify that $Y\subset X$. In fact, if we can verify the following claim:
	\ben\label{pp-se-0}
	\calT\in \calL(l^{(p_1,q_1)}_{1\otimes v_s}(\zdd), l^{(p_2,q_2)}(\zdd))
	\Longrightarrow 
	\begin{cases*}
		l^{p_1}\subset l^{p_2},\ \ l^{q_1}_{v_s}\subset l^{p_2},\ \ l^{p_1}_{v_s}\subset l^{q_2},\\
		s\geq d(1/p_2-1/q_1+1/q_2-1/p_1),
	\end{cases*}
	\een
	then the desired conclusion follows by Lemma \ref{lm-eb-lp}.
	Now, we turn to the proof of this claim. 
	If \eqref{pp-se-cd1} holds, we have
	\ben\label{pp-se-1}
	\big(\sum_{k}\big(\sum_{n}|a_{k-n,n}|^{q_2}\big)^{\frac{p_2}{q_2}}\big)^{\frac{1}{p_2}}\lesssim  \big(\sum_k\big(\sum_n|a_{k,n}|^{q_1}\lan n\ran^{sq_1}\big)^{\frac{p_1}{q_1}}\big)^{\frac{1}{p_1}}.
	\een
	The embedding relations $l^{p_1}\subset l^{p_2},\ \ l^{q_1}_{v_s}\subset l^{p_2},\ \ l^{p_1}_{v_s}\subset l^{q_2}$ follows by taking
	\be
	a_{k,n}=
	\begin{cases}
		b_k, &n=0,
		\\
		0, &n\neq 0,
	\end{cases}
    \ \ \ \ \text{and}\ \ 
    a_{k,n}=
    \begin{cases}
    	b_n, &k=0,
    	\\
    	0, &k\neq 0,
    \end{cases}
    \ \ \ \ \text{and}\ \ 
    a_{k,n}=
    \begin{cases}
    	b_k, &k+n=0,
    	\\
    	0, &k+n\neq 0,
    \end{cases}
    \ee
    in the inequality \eqref{pp-se-1}, respectively.
    
    Moreover, for large constant $N$, take
    \be
    a_{k,n}=
    \begin{cases}
    	1, &|k|\leq 2N, |n|\leq N, 
    	\\
    	0, &\text{others}.
    \end{cases}
    \ee
    One can verify that
    \be
    \begin{split}
    	N^{d(1/q_2+1/p_2)}
    	\lesssim &
    	\|(a_{k-n,n})_{k,n}\|_{l^{(p_2,q_2)}}
    	\\
    	\lesssim &
    	\|(a_{k,n} \lan n\ran^s)_{k,n}\|_{l^{(p_1,q_1)}}
    	\lesssim 
    	N^{s+d(1/q_1+1/p_1)+\epsilon},
    \end{split}
    \ee
    for any $\epsilon>0$. 
    Letting $N\rightarrow \i$ and then $\epsilon\rightarrow 0^+$, we obtain the desired conclusion $s\geq d(1/p_2-1/q_1+1/q_2-1/p_1)$.
    
    \textbf{Step 2.} The goal of this part is to verify that $Y_0=Z_0$,
    where $Z_0:=\{(p_1,p_2,q_1,q_2,s)\in (0,\fy]^4\times \rr: 1/p_2\leq 1/p_1, A\leq0, B\leq0, s\geq0\}$.
    It is obvious to see that $Z_0=X_0$.
    To achieve this goal, we only need to verify that $Z_0\subset Y$.
    In this case, we have
    \ben\label{pp-se-2}
    1/p_2\leq 1/p_1,\ \  1/p_2\leq 1/q_1,\ \  1/q_2\leq 1/p_1,\ \ A\vee B\vee (A+B)\vee 0=0.
    \een 
    From this and Lemma \ref{lm-eb-lp}, we have $l^{p_1}\subset l^{q_2}$ and $l^{q_1}_{v_s}\subset l^{p_2}$.
    We only need to verify $\calT\in \calL(l^{(p_1,q_1)}(\zdd), l^{(p_2,q_2)}(\zdd))$ under the above assumptions \eqref{pp-se-2}.
    In fact, we have
    \be
    \|\calT\vec{a}\|_{l^{(p_2,q_2)}}
    \lesssim \|\calT\vec{a}\|_{l^{(p_2,p_1)}} 
    \lesssim \|\vec{a}\|_{l^{(p_1,p_2)}} 
    \lesssim \|\vec{a}\|_{l^{(p_1,q_1)}_{1\otimes{v_s}}},
    \ee
    where we use the embedding relation $l^{p_1}\subset l^{q_2}$ and $l^{q_1}_{v_s}\subset l^{p_2}$ in the first and third inequality respectively, 
    and Lemma \ref{lm-bd-pq-qp} in the second inequality.
    
    \textbf{Step 3.} The goal of this part is to verify that $Y_1=Z_1$, 
    where $Z_1:=\{(p_1,p_2,q_1,q_2,s)\in (0,\fy]^4\times \rr: 1/p_2\leq 1/p_1, A>0\geq B, s>A\}$.
    Actually by the definition of $X_1$, $Z_1=\{(p_1,p_2,q_1,q_2,s)\in X_1: s> A\vee B\vee (A+B)\vee 0=d(1/p_2-1/q_1)\}$.
    
    For $Y_1\subset Z_1$, we only need to prove $s>d(1/p_2-1/q_1)$. 
    Using \eqref{pp-se-0}, we conclude that 
    \be
    (p_1,p_2,q_1,q_2,s) \in Y\cap X_1 \Longrightarrow l^{q_1}_{v_s}\subset l^{p_2}, \ \ 1/p_2>1/q_1.
    \ee
    Form this and Lemma \ref{lm-eb-lp}, we have
    \be
    s> d(1/p_2-1/q_1)=A\vee B\vee (A+B)\vee 0.
    \ee
    
    On the other hand, if $(p_1,p_2,q_1,q_2,s) \in Z_1$, 
    by Lemma \ref{lm-eb-lp} we have
    \be
    1/p_2\leq 1/p_1,\ \ l^{p_1}\subset l^{q_2}, \ \  l^{q_1}_{v_s}\subset l^{p_2}. 
    \ee
    From this, we conclude that
    \be
    \|\calT\vec{a}\|_{l^{(p_2,q_2)}}\lesssim \|\calT\vec{a}\|_{l^{(p_2,p_1)}}\lesssim \|\vec{a}\|_{l^{(p_1,p_2)}}\lesssim \|\vec{a}\|_{l^{(p_1,q_1)}_{1\otimes v_s}},
    \ee
    where we use the embedding relation $l^{p_1}\subset l^{q_2}$ and $l^{q_1}_{v_s}\subset l^{p_2}$ in the first and third inequality respectively, 
    and Lemma \ref{lm-bd-pq-qp} in the second inequality.
    
    \textbf{Step 4.} The goal of this part is to verify that 
    $Y_2=Z_2$, 
    where $Z_2:=\{(p_1,p_2,q_1,q_2,s)\in (0,\fy]^4\times \rr: 1/p_2\leq 1/p_1, B>0\geq A, s>B\}$.
    Actually, $Z_2=\{(p_1,p_2,q_1,q_2,s)\in X_2: s> A\vee B\vee (A+B)\vee 0=d(1/q_2-1/p_1)\}$.
    
    For $Y_2\subset Z_2$, we only need to prove $s>=d(1/q_2-1/p_1)$.
    Using \eqref{pp-se-0}, we conclude that 
    \be
    (p_1,p_2,q_1,q_2,s) \in Y\cap X_2 \Longrightarrow l^{p_1}_{v_s}\subset l^{q_2}, \ \ 1/q_2>1/p_1.
    \ee
    Form this and Lemma \ref{lm-eb-lp}, we have
    \be
    s> d(1/q_2-1/p_1)=A\vee B\vee (A+B)\vee 0.
    \ee
    
    On the other hand, if $(p_1,p_2,q_1,q_2,s) \in Z_2$, 
    by Lemma \ref{lm-eb-lp} we have
    \be
    1/p_2\leq 1/p_1,\ \ l^{p_1}_{v_s}\subset l^{q_2}, \ \  l^{q_1}\subset l^{p_2}.
    \ee
    From this, we conclude that
    \be
    \|\calT\vec{a}\|_{l^{(p_2,q_2)}}\lesssim \|\calT\vec{a}\|_{l^{(p_2,p_1)}_{1\otimes v_s}}\lesssim \|\vec{a}\|_{l^{(p_1,p_2)}_{1\otimes v_s}}\lesssim \|\vec{a}\|_{l^{(p_1,q_1)}_{1\otimes v_s}},
    \ee
    where we use the embedding relation $l^{p_1}_{v_s}\subset l^{q_2}$ and $l^{q_1}\subset l^{p_2}$ in the first and third inequality respectively, 
    and Lemma \ref{lm-bd-pq-qp} in the second inequality.
    
    \textbf{Step 5.} The goal of this part is to verify that $Y_3=Z_3$, where 
    $Z_3=Z_{3,1} \cup Z_{3,2}$ 
    with 
    \begin{align*}
    Z_{3,1}:&=\{(p_1,p_2,q_1,q_2,s)\in (0,\fy]^4\times \rr: A>0, B>0, s>A+B, p_1\leq p_2 \},
    \\
    Z_{3,2}:&=\{(p_1,p_2,q_1,q_2,s)\in (0,\fy]^4\times \rr: A>0, B>0, s=A+B, p_1<p_2 \}.
    \end{align*}
    By the definition of $X_3$, we have 
    $Z_3=X_3 \setminus \{p_1=p_2, s=A+B\}$.
    
    For $Y_3\subset Z_3$, we only need to prove that in $X_3$ the case $\{p_1=p_2, s=A+B\}$ is not true when $\calT$ is bounded.
    We prove this by contradiction, assuming that $p_1=p_2=p, s=A+B$.
    For a large constant $N$, take
    \be
    a_{k,n}=
    \begin{cases}
    	b_n, &|k|\leq 2N, |n|\leq N, 
    	\\
    	0, &\text{others}.
    \end{cases}
    \ee
    Using the inequality \eqref{pp-se-1} and the following estimates
    \be
    \begin{split}
    	\big(\sum_{k}\big(\sum_{n}|a_{k-n,n}|^{q_2}\big)^{\frac{p}{q_2}}\big)^{\frac{1}{p}}
    	\gtrsim &
    	\big(\sum_{|k|\leq N}\big(\sum_{|n|\leq N}|a_{k-n,n}|^{q_2}\big)^{\frac{p}{q_2}}\big)^{\frac{1}{p}}
    	\\
    	= &
    	\big(\sum_{|k|\leq N}\big(\sum_{|n|\leq N}|b_n|^{q_2}\big)^{\frac{p}{q_2}}\big)^{\frac{1}{p}}
    	\sim 
    	N^{d/p}\big(\sum_{|n|\leq N}|b_n|^{q_2}\big)^{\frac{1}{q_2}},
    \end{split}
    \ee
    and
    \be
    \begin{split}
    	\big(\sum_k\big(\sum_n|a_{k,n}|^{q_1}\lan n\ran^{sq_1}\big)^{\frac{p}{q_1}}\big)^{\frac{1}{p}}
    	= &
    	\big(\sum_{|k|\leq 2N}\big(\sum_{|n|\leq N}|a_{k,n}|^{q_1}\lan n\ran^{sq_1}\big)^{\frac{p}{q_1}}\big)^{\frac{1}{p}}
    	\\
    	= &
    	\big(\sum_{|k|\leq 2N}\big(\sum_{|n|\leq N}|b_n|^{q_1}\lan n\ran^{sq_1}\big)^{\frac{p}{q_1}}\big)^{\frac{1}{p}}
    	\sim 
    	N^{d/p}\big(\sum_{|n|\leq N}|b_n|^{q_1}\lan n\ran^{sq_1}\big)^{\frac{1}{q_1}},
    \end{split}
    \ee
    we obtain the embedding relation $l^{q_1}_{v_s}\subset l^{q_2}$ by letting $N\rightarrow \fy$.
    Using this embedding relation and Lemma \ref{lm-eb-lp} with the fact $1/q_2>1/q_1$, we find $s>d(1/q_2-1/q_1)=A+B$.
    This is a contradiction.    
    
    We turn to prove that $Z_3\subset Y_3$.
    This proof is divided into two parts: $Z_{3,1}\subset Y_3$ and $Z_{3,2}\subset Y_3$.
    
    First, let us prove that $Z_{3,1}\subset Y_3$.
    In this case, one can write $s=s_1+s_2$ with $s_1>A=d(1/p_2-1/q_1)$ and $s_2>B=d(1/q_2-1/p_1)$.
    From this and $A,B>0$, by Lemma \ref{lm-eb-lp}, we obtain the embedding relations $l^{p_1}_{v_{s_2}}\subset l^{q_2}$ and $l^{q_1}_{v_{s_1}}\subset l^{p_2}$.
    Then the desired conclusion follows by
    \be
    \|\calT\vec{a}\|_{l^{p_2,q_2}}
    \lesssim 
    \|\calT\vec{a}\|_{l^{p_2,p_1}_{1\otimes v_{s_2}}}
    \lesssim 
    \|\vec{a}\|_{l^{p_1,p_2}_{1\otimes v_{s_2}}}
    \lesssim 
    \|\vec{a}\|_{l^{p_1,q_1}_{1\otimes v_{s_1+s_2}}}=\|\vec{a}\|_{l^{p_1,q_1}_{1\otimes v_{s}}},
    \ee
    where we use the embedding relation $l^{p_1}_{v_{s_2}}\subset l^{q_2}$ and $l^{q_1}_{v_{s_1}}\subset l^{p_2}$ in the first and third inequality respectively, 
    and Lemma \ref{lm-bd-pq-qp} in the second inequality.

    Finally, we turn to the proof of $Z_{3,2}\subset Y_3$.
    In this case, we have $q_2<p_1<p_2<q_1$ and $s=d(1/p_2-1/q_1+1/q_2-1/p_1)$.
    We first consider the endpoint case $q_1=\fy$ and $s=d(1/p_2+1/q_2-1/p_1)$, then the final conclusion follows by using the interpolation argument. 
    In this endpoint case, we write \eqref{pp-se-1} as
    \ben\label{pp-se-3}
    \big(\sum_{k}\big(\sum_{n}|a_{k-n,n}|^{q_2}\big)^{\frac{p_2}{q_2}}\big)^{\frac{1}{p_2}}\lesssim  \big(\sum_k\big(\sup_n|a_{k,n}|\lan n\ran^{s}\big)^{p_1}\big)^{\frac{1}{p_1}}.
    \een
    Denote by $b_k=\sup_n|a_{k,n}|\lan n\ran^{s}$. We have 
    \be
    |a_{k-n,n}|=|a_{k-n,n}|\lan n\ran^s\lan n\ran^{-s}\leq b_{k-n}\lan n\ran^{-s}.
    \ee
    Then the inequality \eqref{pp-se-3} is true if the following one is valid
    \be
    \big(\sum_{k}\big(\sum_{n}\big|\frac{b_{k-n}}{\lan n\ran^s}\big|^{q_2}\big)^{\frac{p_2}{q_2}}\big)^{\frac{1}{p_2}}\lesssim  \big(\sum_k|b_k|^{p_1}\big)^{\frac{1}{p_1}}.
    \ee 
    This is equivalent to
    \ben\label{pp-se-4}
    \|\calI_{\la}\vec{b}\|_{l^{r_2}}
    =
    \bigg\|\big(\sum_{n}\frac{b_{k-n}}{\lan n\ran^{d-\la}}\big)_k\bigg\|_{l^{r_2}}
    \lesssim \|\vec{b}\|_{l^{r_1}}
    \een
    where $r_2=p_2/q_2$, $r_1=p_1/q_2$, $\la=d(1/r_1-1/r_2)$, $\calI_{\la}$ denotes the fractional integral operator of discrete form.
    Note that $r_1, r_2\in (1,\fy)$, $\la\in (0,d)$. The inequality \eqref{pp-se-4} follows by the boundedness $\calI_{\la}\in \calL(l^{r_1}, l^{r_2})$.
    We refer to \cite{GuoFanWuZhao2018SM} for more details about the boundedness of fractional integral operators.
    
    Let $r=q_1/2$, $\theta=1/2$, $\wt{s}=d(1/\wt{p_2}-1/\wt{q_1}+1/\wt{q_2}-1/\wt{p_1})$ with 
    \be
    \wt{q_1}=\fy,\ \frac{1}{\wt{p_1}}=2\big(\frac{1}{p_1}-\frac{1}{q_1}\big),\ \frac{1}{\wt{p_2}}=2\big(\frac{1}{p_2}-\frac{1}{q_1}\big),\ \frac{1}{\wt{q_2}}=2\big(\frac{1}{q_2}-\frac{1}{q_1}\big).
    \ee
    So we have $\wt{q_2}<\wt{p_1}<\wt{p_2}<\wt{q_1}=\i$, 
    and then $\calT\in \calL(l^{(\wt{p_1},\i)}_{v_{\wt{s}}}(\zdd), l^{(\wt{p_2},\wt{q_2})}(\zdd))$.
    Moreover ,we have the relations
    \be
    \frac{1-\th}{\wt{p_1}}+\frac{\th}{r}=\frac{1}{p_1},\ 
    \frac{1-\th}{\wt{p_2}}+\frac{\th}{r}=\frac{1}{p_2},\ 
    \frac{1-\th}{\wt{q_1}}+\frac{\th}{r}=\frac{1}{q_1},\ 
    \frac{1-\th}{\wt{q_2}}+\frac{\th}{r}=\frac{1}{q_2}.
    \ee
    Then the final conclusion $\calT\in \calL(l^{(p_1,q_1)}_{v_s}(\zdd), l^{(p_2,q_2)}(\zdd))$
    follows by the complex interpolation between the endpoint case 
    $\calT\in \calL(l^{(\wt{p_1},\i)}_{v_{\wt{s}}}(\zdd), l^{(\wt{p_2},\wt{q_2})}(\zdd))$
    proved above and the obvious fact
    $\calT\in \calL(l^{(r,r)},l^{(r,r)})$.
    
    Accordingly, from above steps we obtain that 
    \[
    Y=\bigcup_{j=0}^3 Y_j=\bigcup_{j=0}^3 Z_j.
    \]
    Observe that $\bigcup\limits_{j=0}^3 Z_j$ is precisely the set consists of elements satisfying that:
    $1/p_2\leq 1/p_1$ and $s$ satisfies the conditions in Theorem \ref{thm-bd-se}.
    So we complete the proof.
\end{proof}
 
 \begin{proof}[Proof of Theorem \ref{thm-bd-se}]
 	Using Theorem \ref{thm-bd-eq} and the fact $\calD_{2, 1/2}(1\otimes v_s)\sim 1\otimes v_s$, we obtain 
 	\be
 	e^{i\Delta}\in \calL(W^{p_1,q_1}_{1\otimes v_s}(\rd),W^{p_2,q_2}(\rd))
 	\Longleftrightarrow 
 	\calT\in \calL(l^{(p_1,q_1)}_{1\otimes v_s}(\zdd), l^{(p_2,q_2)}(\zdd)),
 	\ee
 	the desired conclusion follows by Proposition \ref{pp-se}.
 	
\end{proof}

\subsection*{Acknowledgements}
This work was supported by the the National Natural Foundation of China [12371100]
and Natural Science Foundation of Fujian Province
[2021J011192, 2022J011241].

\bibliographystyle{abbrv}

\end{document}